\newtheoremstyle{mystyle}
{3pt}
{3pt}
{\rm}
{}
{\bfseries}
{.}
{.5em}
{}
\newcommand{\RNum}[1]{\uppercase\expandafter{\romannumeral #1\relax}}
\newtheorem{theorem}{Theorem}[section]
\newtheorem{proposition}[theorem]{Proposition}
\newtheorem{lemma}[theorem]{Lemma}
\newtheorem{corollary}[theorem]{Corollary}
\newtheorem{remark}[theorem]{Remark}
\newtheorem{example}[theorem]{Example}
\date{}
\begin{document}
\begin{frontmatter}
\title{Endomorphism Rings of Supersingular Elliptic Curves over $\mathbb{F}_p$ and Binary Quadratic Forms}

\author[NUDT]{Guanju Xiao}
\ead{gjXiao@amss.ac.cn}

\author[NUDT]{Zijian Zhou}
\ead{zhouzijian122006@163.com}

\author[CAS,UCAS]{Yingpu Deng}
\ead{dengyp@amss.ac.cn}

\author[NUDT]{Longjiang Qu\corref{mycorrespondingauthor}}
\cortext[mycorrespondingauthor]{Corresponding author}
\ead{ljqu_happy@hotmail.com}

\address[NUDT]{College of Science, National University of Defense Technology\\
Changsha 410073, China}

\address[CAS]{Key Laboratory of Mathematics Mechanization, NCMIS, Academy of Mathematics and Systems Science, Chinese Academy of Sciences, Beijing 100190, China}
\address[UCAS]{School of Mathematical Sciences, University of Chinese Academy of Sciences\\
Beijing 100049, China  }

\begin{abstract}
It is well known that there is a one-to-one correspondence between supersingular $j$-invariants up to the action of $\text{Gal}(\mathbb{F}_{p^2}/\mathbb{F}_p)$ and type classes of maximal orders in $B_{p,\infty}$ by Deuring's theorem. Interestingly, we establish a one-to-one correspondence between $\mathbb{F}_p$-isomorphism classes of supersingular elliptic curves and primitive reduced binary quadratic forms with discriminant $-p$ or $-16p$. Due to this correspondence and the fact that $\mathbb{F}_p$-isogenies between elliptic curves could be represented by quadratic forms, we show that actions of these isogenies on supersingular elliptic curves over $\mathbb{F}_p$ are compatible with the composition of quadratic forms. Based on these results, we reduce the security of CSIDH cryptosystem to computing this correspondence explicitly.
\end{abstract}

\begin{keyword}
Supersingular Elliptic Curve, Endomorphism Ring, Binary Quadratic Form
\end{keyword}
\end{frontmatter}
\section{Introduction}

In 1941, Deuring \cite{MR5125} proved that there is a one-to-one correspondence between type classes of maximal orders in $B_{p,\infty}$ and isomorphism classes of supersingular elliptic curves up to the action of $\text{Gal}(\mathbb{F}_{p^2}/\mathbb{F}_p)$, where $B_{p,\infty}$ is the unique quaternion algebra over $\mathbb{Q}$ ramified only at $p$ and $\infty$.

For a supersingular elliptic curve $E$ defined over $\mathbb{F}_p$, its endomorphism ring $\text{End}(E)$ contains a root of $x^2+p=0$, which means that the imaginary quadratic order $\mathbb{Z}[\sqrt{-p}]$ can be embedded into $\text{End}(E)$ (see \cite{MR3451433}). Let $q \equiv 3 \pmod 8$ be a prime with $\left( \frac{p}{q} \right)=-1$. Ibukiyama \cite{MR683249} gave two kinds of maximal orders $\mathcal{O}(q,r)$ and $\mathcal{O}'(q,r')$ in $B_{p,\infty}$ where $r$ (resp. $r'$) satisfies $r^2 +p \equiv 0 \pmod q$ (resp. $(r')^2 +p \equiv 0 \pmod {4q}$), and proved that they are isomorphic to endomorphism rings of some supersingular elliptic curves which are defined over $\mathbb{F}_p$. We assume $p>3$ in this paper, because there exists only one supersingular $j$-invariant defined over $\overline{\mathbb{F}}_2$ or $\overline{\mathbb{F}}_3$.

We study the structure of maximal orders $\mathcal{O}(q,r)$'s and $\mathcal{O}'(q,r')$'s. As we see, the imaginary quadratic order $\mathbb{Z}[(1+\sqrt{-p})/{2}]$ can be embedded into every maximal order $\mathcal{O}'(q,r')$. Moreover, we can reduce the maximal order $\mathcal{O}'(q,r')$ to a reduced binary quadratic form with discriminant $-p$ which can represent the prime $q$. On the contrary, given a reduced form $(a,b,c)$ with discriminant $-p$, we can get a unique supersingular $j$-invariant by computing the greatest common divisor of the Hilbert class polynomials $H_{-4a}(X)$ and $H_{-4c}(X)$ in $\mathbb{F}_p$. Similar results also hold for $\mathcal{O}(q,r)$'s, while the corresponding reduced forms are of discriminant $-16p$ which are not in the principal genus. In particular, we establish a correspondence between maximal orders $\mathcal{O}'(q,r')$'s (resp. $\mathcal{O}(q,r)$'s) and reduced binary quadratic forms with discriminant $-p$ (resp. $-16p$).

Supersingular elliptic curves have already been shown to be in bijection to ternary quadratic forms of discriminant $-p$. That correspondence, called the Brandt--Sohn correspondence \cite{Brandt, Sohn}, also goes through the intermediary of maximal orders of quaternions and $\text{End}(E)$ by Deuring. Our results are interesting, since binary quadratic forms are simpler than ternary quadratic forms.

We also study the connections of this correspondence and isogenies between supersingular elliptic curves defined over $\mathbb{F}_p$. Let $E_1$ and $E_2$ be two supersingular elliptic curves defined over $\mathbb{F}_p$. There exists an isogeny $\phi:E_1 \to E_2$ defined over $\mathbb{F}_p$. As we know, the isogeny $\phi$ can be represented by an ideal in $\mathbb{Z}[\sqrt{-p}]$ or $\mathbb{Z}[(1+\sqrt{-p})/{2}]$. Equivalently, we can associate to this ideal a binary quadratic form $g$. If the curve $E_1$ corresponds to the quadratic form $f$, then we prove that the curve $E_2$ corresponds to the form $fg^2$. This shows an explicit relation between isogenies of supersingular elliptic curves defined over $\mathbb{F}_p$ and their endomorphism rings.

Note that this correspondence may have an application in isogeny-based cryptography. Isogeny-based cryptography is a relatively new kind of elliptic-curve cryptography, whose security relies on the problem of finding an explicit isogeny between two given isogenous elliptic curves over a finite field. One of the isogeny-based cryptographic protocols is CSIDH proposed by Castryck et al. \cite{MR3897883} in 2018. CSIDH uses the action of an ideal class group on the set of supersingular elliptic curves defined over $\mathbb{F}_p$. In this paper, we reduce the security of CSIDH to computing the correspondence explicitly.

The remainder of this paper is organized as follows. In Section 2, we review some preliminaries on supersingular elliptic curves defined over $\mathbb{F}_p$ and binary quadratic forms. We prove the correspondence between supersingular elliptic curves over $\mathbb{F}_p$ and binary quadratic forms in Section 3. In Section 4, we prove that the composition of forms corresponds to the action of isogenies on elliptic curves. In Section 5, we discuss the applications of these results to CSIDH. Finally, we make a conclusion in Section 6.

\section{Preliminaries}

\subsection{Elliptic curves and isogenies}
We present some basic facts about elliptic curves over finite fields, and the readers can refer to \cite{MR2514094} for more details.

Let $\mathbb{F}_q$ be a finite field with characteristic $p>3$. An elliptic curve $E$ defined over $\mathbb{F}_q$ can be written as a Weierstrass model $E:Y^2=X^3+aX+b$ where $a,b \in \mathbb{F}_q$ and $4a^3+27b^2 \neq 0$. The $j$-invariant of $E$ is $j(E)=1728\cdot 4a^3/(4a^3+27b^2)$. Different elliptic curves with the same $j$-invariant are isomorphic over the algebraic closure of the base field. The chord-and-tangent addition law makes of $E(\mathbb{F}_q)=\left\{(x,y)\in \mathbb{F}_q ^2:y^2=x^3+ax+b\right\} \cup \left\{\infty \right\}$ an abelian group, where $\infty$ is the point at infinity.

Let $E_1$ and $E_2$ be elliptic curves defined over $\mathbb{F}_q$. An isogeny from $E_1$ to $E_2$ is a morphism $\phi:E_1 \rightarrow E_2$ satisfying $\phi(\infty)=\infty$. We always assume $\phi \neq 0$. The isogeny $\phi$ is a surjective group homomorphism with finite kernel, and it is called a $\mathbb{F}_q$-isogeny if it is defined over $\mathbb{F}_q$. Every $\mathbb{F}_q$-isogeny can be represented as $\phi = (r_1(X), r_2(X) \cdot Y)$, where $r_1(X), r_2(X) \in \mathbb{F}_q(X)$. If $r_1(X)=p_1(X)/q_1(X)$ with $p_1, q_1 \in \mathbb{F}_q[X]$ and $\text{gcd}(p_1,q_1)=1$, then the degree of $\phi$ is $\text{max}(\text{deg}\ p_1, \text{deg}\ q_1)$. The isogeny $\phi$ is said to be separable if $r_1 '(X) \neq 0$. Note that all isogenies of prime degree $\ell\neq p$ are separable.

An endomorphism of $E$ is an isogeny from $E$ to itself or the zero map. The Frobenius map $\pi :(x,y) \mapsto (x^q,y^q)$ is an inseparable endomorphism. The characteristic polynomial of $\pi$ is $x^2-tx+q$, where $t$ is the trace of $\pi$. It is well known that $E$ is supersingular (resp. ordinary) if $p\mid t$ (resp. $p \nmid t$). Moreover, the $j$-invariant of every supersingular elliptic curve over $\overline{\mathbb{F}}_p$ is proved to be in $\mathbb{F}_{p^2}$ and it is called a supersingular $j$-invariant.

The set $\text{End}(E)$ of all endomorphisms of $E$ over $\overline{\mathbb{F}}_q$ form a ring under the usual addition and composition as multiplication. The set $\text{End}_{\mathbb{F}_q}(E)$ is a subring of $\text{End}(E)$ which contains all the endomorphisms over $\mathbb{F}_q$.

\subsection{Imaginary quadratic orders and binary quadratic forms}
We recall some basic facts about imaginary quadratic orders and binary quadratic forms. The general references are \cite{MR1012948, MR3236783}.

Let $K$ be an imaginary quadratic field. Denote its ring of integers by $O_K$ and its discriminant by $D_K$. Let $O$ be an order of $K$. The conductor of $O$ is $f=\left[O_{K}: O\right]$, and the discriminant of $O$ is $f^{2} D_{K}$.

Let $I(O)$ be the group of proper fractional $O$-ideals. The principal proper fractional $O$-ideals give a subgroup $P(O)\subset I(O)$. The ideal class group of $O$ is $C(O)=I(O) / P(O)$ and the class number of $O$ is $h(O)=\# C(O)$. Let $I(f)$ be the group of fractional $O_{K}$-ideals prime to $f$. Let $P_{\mathbb{Z}}(f)$ be the subgroup of $I(f)$ generated by principal ideals of the form $\alpha O_K$, where $\alpha \in O_K$ satisfies $\alpha \equiv a \bmod f O_{K}$ for $a \in \left( \mathbb{Z}/f\mathbb{Z} \right) ^{\times}$. The group $C(O)$ is canonically isomorphic to the ring class group $I(f) / P_{\mathbb{Z}}(f)$. We often denote the class number of $O$ by $h(D)$ if the discriminant of $O$ is $D$.

We can relate the class numbers $h(m^2D)$ and $h(D)$ as follows.
\begin{proposition}(\cite[Corollary 7.28]{MR3236783})
Let $D \equiv 0,1 \pmod 4$ be a negative integer and $m$ a positive integer. Then
$$h(m^2D)=\frac{h(D)m}{[O^{\times}:O'^{\times}]} \prod_{p | m} \left( 1- \left( \frac{D}{p} \right) \frac{1}{p} \right),$$
where $O$ and $O'$ are the orders of discriminant $D$ and $m^2D$ respectively.
\end{proposition}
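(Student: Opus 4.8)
The plan is to work with the ring-class-group description $C(O)\cong I(f)/P_{\mathbb{Z}}(f)$ that we have just recalled, and to compare the two orders through a single exact sequence rather than attacking the two class numbers separately. Write $O_K$ for the maximal order and $D_K$ for its discriminant, so that $O$ has conductor $f$ with $D=f^{2}D_K$ and $O'$ has conductor $fm$ with $m^{2}D=(fm)^{2}D_K$. The first observation is that $O'=\mathbb{Z}+mO$: indeed $\mathbb{Z}+mO=\mathbb{Z}+m(\mathbb{Z}+fO_K)=\mathbb{Z}+fmO_K=O'$. Hence $O'\subseteq O$ is an inclusion of orders with $[O:O']=m$ whose conductor (the largest $O$-ideal contained in $O'$) is exactly $mO$. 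This puts the pair $(O',O)$ in the same formal position as the pair (order of conductor $f$, maximal order), with $O_K$ replaced by $O$ and $f$ replaced by $m$, so the entire argument can be run one level up.

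The key step is to establish the exact sequence
$$1\longrightarrow O^{\times}/O'^{\times}\longrightarrow (O/mO)^{\times}/(\mathbb{Z}/m\mathbb{Z})^{\times}\longrightarrow C(O')\longrightarrow C(O)\longrightarrow 1 .$$
The right-hand map is induced by extension of ideals $\mathfrak{a}\mapsto \mathfrak{a}O$, restricted to ideals prime to $m$ (harmless, since every class has such a representative); its surjectivity uses that a proper $O$-ideal prime to $m$ pulls back to a proper $O'$-ideal, and the identification of the kernel with the image of $(O/mO)^{\times}$ comes from the isomorphism $C(O)\cong I(f)/P_{\mathbb{Z}}(f)$ together with the analogous description for $O'$. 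Granting the sequence, exactness forces the alternating product of cardinalities to be $1$, which rearranges to
$$h(O')=\frac{h(O)}{[O^{\times}:O'^{\times}]}\cdot\frac{\bigl|(O/mO)^{\times}\bigr|}{\bigl|(\mathbb{Z}/m\mathbb{Z})^{\times}\bigr|}.$$

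It then remains to evaluate the index $\bigl|(O/mO)^{\times}\bigr|/\bigl|(\mathbb{Z}/m\mathbb{Z})^{\times}\bigr|$. By the Chinese Remainder Theorem this is multiplicative in $m$, so I would reduce to a prime power $p^{k}\,\|\,m$. The reduction $O/p^{k}O\to O/pO$ is surjective with kernel of order $p^{2(k-1)}$, and an element is a unit iff its reduction is, so that $\bigl|(O/p^{k}O)^{\times}\bigr|=p^{2(k-1)}\bigl|(O/pO)^{\times}\bigr|$; meanwhile $O/pO$ is a two-dimensional $\mathbb{F}_{p}$-algebra that is $\mathbb{F}_{p}\times\mathbb{F}_{p}$, $\mathbb{F}_{p^{2}}$, or $\mathbb{F}_{p}[\varepsilon]$ according as $\left(\tfrac{D}{p}\right)$ equals $+1$, $-1$, or $0$. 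In all three cases one checks the uniform formula $\bigl|(O/pO)^{\times}\bigr|=p^{2}\left(1-\tfrac1p\right)\left(1-\left(\tfrac{D}{p}\right)\tfrac1p\right)$, whence $\bigl|(O/p^{k}O)^{\times}\bigr|/\varphi(p^{k})=p^{k}\left(1-\left(\tfrac{D}{p}\right)\tfrac1p\right)$. Taking the product over $p\mid m$ returns the factor $m\prod_{p\mid m}\left(1-\left(\tfrac{D}{p}\right)\tfrac1p\right)$, and combining with the displayed index formula (with $h(O')=h(m^{2}D)$ and $h(O)=h(D)$) yields the claim.

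The main obstacle is the exact sequence, and within it the precise bookkeeping of the middle term: one must show that the kernel of $C(O')\to C(O)$ is exactly $(O/mO)^{\times}$ modulo the images of $(\mathbb{Z}/m\mathbb{Z})^{\times}$ and of $O^{\times}$. This is where the congruence condition defining $P_{\mathbb{Z}}(f)$ does the real work: a principal $O'$-ideal becomes principal in $O$ precisely when its generator is congruent to a rational integer modulo $mO$, and untangling this equivalence — rather than the local unit count, which is routine — is the delicate point. A cleaner but less self-contained alternative is to apply the maximal-order version of the formula to $O_K\to O$ and to $O_K\to O'$ and divide, using that $\left(\tfrac{D}{p}\right)=\left(\tfrac{D_K}{p}\right)$ for $p\nmid f$ while both symbols vanish for $p\mid f$; this sidesteps the sequence entirely at the cost of assuming the maximal-order case.
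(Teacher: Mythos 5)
The paper gives no proof of this proposition at all: it is quoted directly from Cox, so the only available comparison is with Cox's argument. There, Corollary 7.28 is obtained by applying the absolute class number formula (Theorem 7.24, relating $h(O)$ to $h(O_K)$) to the two conductors $f$ and $fm$ and dividing; that is exactly the ``less self-contained alternative'' you relegate to your closing sentences. Your main route instead runs the exact-sequence proof of Theorem 7.24 one level up, replacing the pair $(O,O_K)$ by $(O',O)$. The outline is sound: the identification $O'=\mathbb{Z}+mO$, the index $[O:O']=m$, the extraction of the class-number relation from the five-term sequence, and the local unit count $\bigl|(O/p^{k}O)^{\times}\bigr|/\varphi(p^{k})=p^{k}\bigl(1-\bigl(\tfrac{D}{p}\bigr)\tfrac{1}{p}\bigr)$ via the three possible shapes of $O/pO$ are all correct. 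What this buys over Cox is symmetry (no detour through the maximal order, and no need to compare Kronecker symbols of $D$ and $D_K$); what it costs is that every ingredient of the exact sequence must be re-established for a pair of orders neither of which is maximal.

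That re-establishment is the one genuine gap, and you have located it accurately but not closed it. Concretely you need: (i) every class of $C(O')$ contains a proper ideal prime to $m$; (ii) $\mathfrak{a}\mapsto\mathfrak{a}O$ is a norm-preserving multiplicative bijection from proper $O'$-ideals prime to $m$ to proper $O$-ideals prime to $m$, with inverse $\mathfrak{b}\mapsto\mathfrak{b}\cap O'$; (iii) the extended ideal $\mathfrak{a}O$ is principal precisely when $\mathfrak{a}=\alpha O'$ up to a generator $\alpha\in O$ whose class modulo $mO$ lies in the subgroup generated by $(\mathbb{Z}/m\mathbb{Z})^{\times}$ and the image of $O^{\times}$ --- this is what identifies the kernel of $C(O')\to C(O)$ and makes the middle term of your sequence correct. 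Cox proves (i)--(iii) only for an order inside the maximal order; the relative versions do hold (e.g.\ by lifting everything to $O_K$-ideals prime to $fm$), but they are not free, and the congruence bookkeeping in (iii) is precisely where an error could hide. One small slip in your fallback route: for $p\mid f$ it is $\bigl(\tfrac{D}{p}\bigr)$ that vanishes, not $\bigl(\tfrac{D_K}{p}\bigr)$ in general; the division still works because the factors at primes dividing $f$ are identical in the two instances of Theorem 7.24 and cancel.
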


We denote a quadratic form $f(X,Y)=aX^2+bXY+cY^2$ by $(a,b,c)$ with $a,b,c \in \mathbb{Z}$. Its discriminant is $D=b^2-4ac$. Note that all forms are positive definite in this paper, which means $D < 0$ and $a>0$. A form $(a,b,c)$ is called primitive if $\text{gcd}(a,b,c)=1$. Two forms $f(X,Y)$ and $g(X,Y)$ are equivalent if there are integers $m$, $n$, $r$ and $s$ such that
$$f(X,Y)=g(mX+nY,rX+sY) \quad \text{and} \quad ms-nr=1. $$
In this case, we write $f\sim g$.
 If a form $(a,b,c)$ satisfies
\begin{itemize}
  \item [(1)] $-a < b \le a$,
  \item [(2)] $ a\le c $,
  \item [(3)] if $a=c$ then $b \ge 0$,
\end{itemize}
then it is reduced. Every primitive positive definite form is equivalent to a unique reduced form (see \cite[Theorem 2.8]{MR3236783}).

For every $D=b^2-4ac$, there is an imaginary quadratic order $O$ such that the discriminant of $O$ is $D$. By \cite[Theorem 6.15]{MR1012948}, every ideal in $O$ has an integral basis, and we use $[a,\beta]=\mathbb{Z}a+\mathbb{Z}\beta$ to represent an ideal in $O$, where $a$ is a rational integer and $\beta$ is a quadratic algebra integer. We can relate the ideal class group $C(O)$ and the form class group $C(D)$ defined in \cite[\S 3]{MR3236783} as follows:
\begin{proposition}\label{t3}(\cite[Theorem 7.7]{MR3236783})
 Let $O$ be the order of discriminant $D$ in an imaginary quadratic field $K$. Then:
\begin{itemize}
  \item [(1)] If $f(X,Y)=aX^2+bXY+cY^2$ is a primitive positive definite quadratic form of discriminant $D$, then $[a, (-b+\sqrt{D})/2]$ is a proper ideal of $O$.
  \item [(2)] The map sending $f(X,Y)$ to $[a, (-b+\sqrt{D})/2]$ induces an isomorphism $\iota : C(D) \longrightarrow C(O)$.
  \item [(3)] A positive integer $m$ is represented by a form $f(X,Y)$ if and only if $m$ is the norm of some ideal $\mathfrak{a}$ in the corresponding ideal class in $C(O)$.
\end{itemize}
\end{proposition}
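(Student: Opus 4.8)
The plan is to prove the three assertions by constructing an explicit inverse to the stated map and reducing the group-theoretic claim to the arithmetic of norm forms. Throughout I write $\tau=(-b+\sqrt{D})/2$, so that $[a,(-b+\sqrt{D})/2]=a\mathbb{Z}+\tau\mathbb{Z}$, and I let $\omega=(D+\sqrt{D})/2$ be the standard $\mathbb{Z}$-generator of $O$, so that $O=\mathbb{Z}+\mathbb{Z}\omega$.

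For part (1), the first step is to check that $\tau\in O_K$: substituting $\sqrt{D}=2\tau+b$ into $D=b^{2}-4ac$ shows that $\tau$ is a root of the monic integral polynomial $X^{2}+bX+ac$, whence $\tau^{2}=-b\tau-ac$. Next I would verify that $a\mathbb{Z}+\tau\mathbb{Z}$ is stable under multiplication by $O$; since $D\equiv b\pmod 2$ one has $\omega=\tfrac{D+b}{2}+\tau$ with $\tfrac{D+b}{2}\in\mathbb{Z}$, and then both $\omega a$ and $\omega\tau$ are seen to lie in $a\mathbb{Z}+\tau\mathbb{Z}$ using $\tau^{2}=-b\tau-ac$. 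Finally, to see that the ideal is \emph{proper} I would compute the multiplier ring $\{\beta\in K:\beta\mathfrak{a}\subseteq\mathfrak{a}\}$ and show it equals $O$ rather than a strictly larger order; this is exactly where the primitivity hypothesis $\gcd(a,b,c)=1$ enters, since a common factor would enlarge the multiplier ring and shrink the conductor.

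For part (2), I would first produce the candidate inverse: to a proper ideal $\mathfrak{a}$ with a positively oriented $\mathbb{Z}$-basis $[\alpha,\beta]$ assign the form $f(x,y)=N(\alpha x+\beta y)/N(\mathfrak{a})$, and check that it is a primitive positive definite form of discriminant $D$. The routine steps are then: (a) equivalent forms produce ideals in the same proper class, since an $\mathrm{SL}_2(\mathbb{Z})$ change of variables corresponds to changing the oriented basis; (b) rescaling an ideal by a principal ideal leaves the form class unchanged, because it merely rescales the norm form; and (c) the two maps are mutually inverse, by the explicit formulas. The genuine obstacle, and the step I expect to be hardest, is verifying that $\iota$ is a group \emph{homomorphism}: the group law on $C(D)$ is Dirichlet composition of forms, which must be matched with ideal multiplication in $C(O)$. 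The plan is to reduce two given classes to \emph{concordant} representatives $(a_1,b,a_2c)$ and $(a_2,b,a_1c)$ sharing the same middle coefficient (with $\gcd(a_1,a_2)=1$), for which the Dirichlet composite is $(a_1a_2,b,c)$, and then to check directly the ideal identity $[a_1,\tau]\,[a_2,\tau]=[a_1a_2,\tau]$, so that composition on the form side is carried to multiplication on the ideal side.

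For part (3), the identity $f(x,y)=N(\alpha x+\beta y)/N(\mathfrak{a})$ does the work. If $m=f(x_0,y_0)$ then $\gamma=\alpha x_0+\beta y_0\in\mathfrak{a}$ has norm $N(\gamma)=mN(\mathfrak{a})$, so $(\gamma)\mathfrak{a}^{-1}$ is an integral ideal of norm $m$; it lies in the class $[\mathfrak{a}]^{-1}=[\overline{\mathfrak{a}}]$, and since conjugation preserves norms and inverts classes, the set of norms realized in the class of $\mathfrak{a}$ coincides with that realized in $[\mathfrak{a}]^{-1}$, which yields the stated equivalence. Conversely, any integral ideal of norm $m$ in the relevant class is a principal multiple of $\mathfrak{a}$, and its generator supplies an integral pair $(x_0,y_0)$ with $f(x_0,y_0)=m$. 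The only subtlety here is keeping track of the exact class and the basis orientation, and this is already pinned down once parts (1) and (2) fix the dictionary between oriented bases and forms.
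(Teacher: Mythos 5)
The paper offers no proof of this proposition at all: it is quoted directly from Cox \cite[Theorem~7.7]{MR3236783}, and your outline --- integrality of $\tau$ from $\tau^{2}+b\tau+ac=0$, properness of $[a,\tau]$ by computing the multiplier ring and using $\gcd(a,b,c)=1$, the inverse map $\mathfrak{a}\mapsto N(\alpha X-\beta Y)/N(\mathfrak{a})$, the homomorphism property via concordant representatives and the identity $[a_1,\tau][a_2,\tau]=[a_1a_2,\tau]$, and part (3) via $N(\gamma)=mN(\mathfrak{a})$ together with the conjugation remark --- is exactly the standard argument given there. Your plan is correct and matches the cited source's proof, so there is nothing to add beyond noting that the reduction to concordant forms itself relies on the lemma that a primitive form represents integers prime to any prescribed modulus.
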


\begin{remark}
If $\mathfrak{a}=[a,\beta]$ is a proper $O$-ideal with $\operatorname{Im}(\beta / a)>0$, then
$$f(X,Y)=\frac{N(a X-\beta Y)}{N(\mathfrak{a})}$$
is a positive definite form of discriminant $D$. On the level of classes, this map is the inverse to the map $\iota$ in Proposition \ref{t3}.
\end{remark}

By Proposition \ref{t3}, one can use primitive reduced quadratic forms to represent proper ideal classes in an imaginary quadratic order, and the composition of forms is equivalent to the multiplication of ideals.

\subsection{The Isogeny Graphs of Supersingular Elliptic Curves over $\mathbb{F}_p$}

We discuss the isogeny graphs of supersingular elliptic curves over $\mathbb{F}_p$, and the reader can refer to \cite{MR3451433} for more details.

Let $E$ be a supersingular elliptic curve over $\mathbb{F}_p$ with $p>3$. We know that $\text{End}_{\mathbb{F}_p}(E)$ is an order in $K=\mathbb{Q}(\sqrt{-p})$. Furthermore, we have
$$\mathbb{Z}[\sqrt{-p}] \subseteq \operatorname{End}_{\mathbb{F}_p}(E) \subseteq O_K,$$
where $O_K$ is the integer ring of $K$.

\begin{itemize}
  \item If $p \equiv 1 \pmod 4$, we always have $\operatorname{End}_{\mathbb{F}_p}(E)=\mathbb{Z}[\sqrt{-p}]$ for a supersingular elliptic curve $E$ over $\mathbb{F}_p$.
  \item If $p \equiv 3 \pmod 4$, then $\mathbb{Z}[\sqrt{-p}]$ has conductor $2$ in $O_K$ and $\operatorname{End}_{\mathbb{F}_p}(E)$ must be one of these two orders.
\end{itemize}

We say that the curve $E$ is on the surface (resp. floor) if $\operatorname{End}_{\mathbb{F}_{p}} (E)=\mathcal{O}_{K}$ (resp. $\mathbb{Z}[\sqrt{-p}]$ ). Note that surface and floor coincide for $p \equiv 1 \pmod 4$.

Let $\phi$ be an $\ell$-isogeny between supersingular elliptic curves $E$ and $E'$ over $\mathbb{F}_{p}$. If $\operatorname{End}_{\mathbb{F}_{p}} (E) \cong \operatorname{End}_{\mathbb{F}_{p}} (E')$, then $\phi$ is called horizontal. If $E$ is on the floor and $E'$ is on the surface (resp. $E$ on the surface and $E'$ on the floor), $\phi$ is called $\ell$-isogeny up (resp. down).

\begin{proposition}(\cite[Theorem 2.7]{MR3451433})
Let $p>3$ be a prime.

(1) $p \equiv 1 \pmod 4$: There are $h(-4 p)$ $\mathbb{F}_{p}$-isomorphism classes of supersingular elliptic curves over $\mathbb{F}_{p}$, all having the same endomorphism ring $\mathbb{Z}[\sqrt{-p}]$. From every one there is one outgoing $\mathbb{F}_{p}$-rational horizontal $2$-isogeny as well as two horizontal $\ell$-isogenies for every prime $\ell>2$ with $\left(\frac{-p}{\ell}\right)=1$.

(2) $p \equiv 3 \pmod 4$: There are two levels in the supersingular isogeny graph. From each vertex there are two horizontal $\ell$-isogenies for every prime $\ell>2$ with $\left(\frac{-p}{\ell}\right)=1$.
\begin{itemize}
  \item If $p \equiv 7 \pmod 8$, on each level $h(-p)$ vertices are situated. Surface and floor are connected 1:1 with $2$-isogenies and on the surface we also have two horizontal 2-isogenies from each vertex.
  \item If $p \equiv 3 \pmod 8$, there are $h(-p)$ vertices on the surface and $3 h(-p)$ on the floor. Surface and floor are connected 1:3 with $2$-isogenies, and there are no horizontal 2-isogenies.
\end{itemize}
\end{proposition}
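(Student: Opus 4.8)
The plan is to treat these supersingular curves by the complex-multiplication theory that usually applies to ordinary curves, the key point being that $\operatorname{End}_{\mathbb{F}_p}(E)$ is an imaginary quadratic order. First I would record that for $p>3$ the Frobenius $\pi$ has trace $t=0$: since $p\mid t$ and $|t|\le 2\sqrt p$, the only possibility is $t=0$, so $\pi^2=-p$ and $\mathbb{Z}[\sqrt{-p}]=\mathbb{Z}[\pi]\subseteq\operatorname{End}_{\mathbb{F}_p}(E)\subseteq O_K$. Hence $\operatorname{End}_{\mathbb{F}_p}(E)$ is one of $\mathbb{Z}[\sqrt{-p}]$ (floor, discriminant $-4p$) or $O_K$ (surface, discriminant $-p$ when $p\equiv 3\pmod 4$), and these coincide exactly when $p\equiv 1\pmod 4$. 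The vertex counts then rest on the CM action: for each admissible order $O$, the class group $C(O)$ acts freely and transitively on the set of $\mathbb{F}_p$-isomorphism classes of supersingular $E/\mathbb{F}_p$ with $\operatorname{End}_{\mathbb{F}_p}(E)\cong O$, so that set has size $h(O)$ (cf.\ \cite{MR3451433}). Applying this to the two orders and comparing $h(-4p)$ with $h(-p)$ via the class-number relation recalled above (take $m=2$, $D=-p$, and note $O_K^\times=\{\pm1\}$ for $p>3$), together with the fact that the Kronecker symbol $\left(\frac{-p}{2}\right)$ equals $1$ when $p\equiv 7\pmod 8$ and $-1$ when $p\equiv 3\pmod 8$, gives $h(-4p)=h(-p)$ in the first case and $h(-4p)=3h(-p)$ in the second. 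These are exactly the stated surface and floor counts.

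For the edges of odd degree, I would count $\mathbb{F}_p$-rational subgroups of order $\ell$ through the action of $\pi$ on $E[\ell]\cong(\mathbb{Z}/\ell)^2$, whose characteristic polynomial is $x^2+p\pmod\ell$. For $\ell>2$, $\ell\neq p$, this polynomial has two distinct roots precisely when $\left(\frac{-p}{\ell}\right)=1$, giving exactly two rational order-$\ell$ subgroups and hence two $\ell$-isogenies; they are horizontal because $\ell$ is prime to the conductor, so each kernel ideal is an invertible proper $O$-ideal and the class-group action preserves $O$.

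The prime $\ell=2$ is the delicate case and carries the main case analysis. Here $x^2+p\equiv(x+1)^2\pmod 2$, so the number of rational $2$-subgroups is governed by whether $\pi$ acts as a scalar or as a nontrivial Jordan block on $E[2]$, i.e.\ by whether the full $2$-torsion is $\mathbb{F}_p$-rational. Since $\#E(\mathbb{F}_p)=p+1$, this is equivalent to $2\mid(\pi-1)$ in $\operatorname{End}_{\mathbb{F}_p}(E)$, i.e.\ to $(\sqrt{-p}-1)/2\in\operatorname{End}_{\mathbb{F}_p}(E)$. A direct check of bases shows this membership holds exactly on the surface (where $(1+\sqrt{-p})/2\in O_K$) and fails on the floor and when $p\equiv 1\pmod 4$. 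Thus floor curves, and all curves when $p\equiv 1\pmod 4$, have a single rational $2$-subgroup, while surface curves have three. Splitting these according to the behaviour of $2$ in $O_K$ finishes the description: for $p\equiv 1\pmod 4$ the prime $2$ ramifies, giving the single (necessarily horizontal) $2$-isogeny; for $p\equiv 7\pmod 8$ the prime $2$ splits, so two of the three surface $2$-subgroups are horizontal and one descends, while the unique floor $2$-subgroup ascends, yielding the $1\!:\!1$ connection; for $p\equiv 3\pmod 8$ the prime $2$ is inert, so none of the three surface $2$-subgroups is horizontal and all three descend, yielding the $1\!:\!3$ connection. The ascending and descending directions are forced because the prime above $2$ is non-invertible in $\mathbb{Z}[\sqrt{-p}]$.

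I expect the free and transitive CM action over $\mathbb{F}_p$ to be the real obstacle, specifically pinning the count to exactly $h(O)$ rather than $2h(O)$. Because $t=0$, a curve and its quadratic twist share the same Frobenius trace, so the usual ordinary-CM bookkeeping of one torsor per Weil number must be reworked to confirm that the $\mathbb{F}_p$-isomorphism classes with a fixed $O$ form a single $C(O)$-torsor rather than a disjoint pair. Once this is secured, the edge counts above are essentially mechanical, and the reliable cross-check is that the local degree accounting at $2$ reproduces the global class-number ratios $h(-4p)/h(-p)\in\{1,3\}$ obtained in the first step.
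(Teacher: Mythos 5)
The paper offers no proof of this proposition at all: it is quoted verbatim from Delfs--Galbraith \cite[Theorem~2.7]{MR3451433}, so the only thing to compare your sketch against is the argument in that reference --- and your sketch is essentially that argument. The reduction to $t=0$, the identification of the two possible orders $\mathbb{Z}[\sqrt{-p}]\subseteq\operatorname{End}_{\mathbb{F}_p}(E)\subseteq O_K$, the class-number relation with $m=2$ giving $h(-4p)/h(-p)\in\{1,3\}$ according to $\left(\frac{-p}{2}\right)$, the count of rational order-$\ell$ subgroups via the action of $\pi$ on $E[\ell]$, and the trichotomy at $\ell=2$ governed by whether $(\pi-1)/2$ is an endomorphism and by the splitting of $2$ in $O_K$ are all exactly the steps in the cited proof. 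Your local-versus-global cross-check (the $2$-adic edge count reproducing $h(-4p)/h(-p)$) is a nice sanity device that the reference also implicitly relies on.

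The one step you flag but do not close --- that the $\mathbb{F}_p$-isomorphism classes with $\operatorname{End}_{\mathbb{F}_p}(E)\cong O$ form a \emph{single} $C(O)$-torsor rather than two --- is indeed the only substantive input, and it does not need to be ``reworked'': it is Waterhouse's kernel-ideal theorem applied to the $\mathbb{F}_p$-isogeny class of trace $0$, which is how Delfs--Galbraith obtain it. The twist worry resolves itself as follows: since $t=0$, a curve and its quadratic twist both have $p+1$ points, hence lie in the same $\mathbb{F}_p$-isogeny class by Tate; when they have the same order $O$ they are therefore connected by a horizontal isogeny and sit in the same orbit (the twist is the image of $E$ under a specific ideal class), and when they do not (e.g.\ $E(1728)$ on the surface versus its twist on the floor for $p\equiv 3\pmod 4$) they are counted in different torsors anyway. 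So there is no doubling. Without invoking Waterhouse (or reproving freeness via kernel ideals and transitivity via connectedness of the horizontal isogeny graph), your vertex counts remain unestablished, so you should either cite that theorem explicitly or supply that argument; with it, the rest of your sketch is complete. One small caution: ``two horizontal $\ell$-isogenies'' counts the two distinct rational kernels, i.e.\ outgoing edges, not necessarily two distinct target curves, and your subgroup count delivers exactly that.
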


\subsection{Endomorphism rings of supersingular elliptic curves}
For a supersingular elliptic curve $E$ over $\mathbb{F}_{p^2}$, the endomorphism ring $\text{End}(E)$ is isomorphic to a maximal order of $B_{p,\infty}$, where $B_{p,\infty}$ is the unique quaternion algebra over $\mathbb{Q}$ ramified only at $p$ and $\infty$.

An order $\mathcal{O}$ of $B_{p,\infty}$ is a subring of $B_{p,\infty}$ which is also a lattice, and it is called a maximal order if it is not properly contained in any other order. Let $\mathcal{O}$ be a maximal order of $B_{p,\infty}$ and $I$ a left ideal of $\mathcal{O}$. Define the left order $\mathcal{O}_L(I)$ and the right order $\mathcal{O}_R(I)$ of $I$ by
$$\mathcal{O}_L(I)=\left\{x\in B_{p,\infty}:xI\subseteq I \right \}, \quad \mathcal{O}_R(I)=\left\{x\in B_{p,\infty}:Ix \subseteq I\right \}  .$$
Moreover, $\mathcal{O}_L(I)=\mathcal{O}$, and $\mathcal{O}_R(I)=\mathcal{O}'$ is also a maximal order, in which case we say that $I$ connects $\mathcal{O}$ and $\mathcal{O}^{'}$. The reduced norm of $I$ can be defined as $$\text{Nrd}(I)=\text{gcd}(\{\text{Nrd}(\alpha) | \alpha \in I \}).$$

Deuring \cite{MR5125} gave an equivalence of categories between supersingular $j$-invariants and maximal orders in the quaternion algebra $B_{p,\infty}$. Furthermore, if $E$ is a supersingular elliptic curve with $\text{End}(E)=\mathcal{O}$, then there is a one-to-one correspondence between isogenies $\phi:E\to E^{'}$ and left $\mathcal{O}$-ideals $I$. More details on the correspondence can be found in Chapter 42 of \cite{MR4279905}.

If the supersingular elliptic curve $E$ is defined over $\mathbb{F}_p$, then $\text{End}(E)$ contains an element $\alpha$ with minimal polynomial $x^2+p$ (see \cite{MR3451433}). Equivalently, the endomorphism ring $\text{End}(E)$ contains a subring $\mathbb{Z}[\sqrt{-p}]$. Ibukiyama \cite{MR683249} has given an explicit description of all maximal orders $\mathcal{O}$ in $B_{p, \infty}$ containing a root of $x^2+p=0$.

Choose a prime integer $q$ such that
\begin{align}\label{e1}
q \equiv 3 \pmod 8, \quad \left( \frac{p}{q} \right) =-1.
\end{align}
Then $B_{p,\infty}$ can be written as $B_{p,\infty}=\mathbb{Q}+\mathbb{Q}\alpha+\mathbb{Q}\beta+\mathbb{Q}\alpha\beta$ where $\alpha^2=-p$, $\beta^2=-q$ and $\alpha\beta=-\beta\alpha$. Let $r$ be an integer such that $r^2+p \equiv 0 \pmod  q$. Put
$$\mathcal{O}(q,r)=\mathbb{Z} + \mathbb{Z}\frac{1+\beta}{2} + \mathbb{Z} \frac{\alpha(1+\beta)}{2} + \mathbb{Z}\frac{(r+\alpha)\beta}{q}.$$
When $p \equiv 3 \pmod  4$, we choose an integer $r'$ such that $(r')^2+p \equiv 0 \pmod {4q}$ and put $$\mathcal{O}'(q,r')=\mathbb{Z} + \mathbb{Z}\frac{1+\alpha}{2} + \mathbb{Z} \beta + \mathbb{Z}\frac{(r'+\alpha)\beta}{2q}.$$

Note that the isomorphism class of $\mathcal{O}(q,r)$ or $\mathcal{O}'(q,r')$ depends on $q$, but does not depend on $r$ or $r'$. Ibukiyama's results \cite{MR683249} show that both $\mathcal{O}(q,r)$ and $\mathcal{O}'(q,r')$ are maximal orders of $B_{p,\infty}$ and the endomorphism ring $\text{End}(E)$ is isomorphic to $\mathcal{O}(q,r)$ or $\mathcal{O'}(q,r')$ with suitable choice of $q$ if $j(E) \in \mathbb{F}_p$.

Let $j \in \mathbb{F}_p$ be a supersingular $j$-invariant and $E(j)$ be the corresponding supersingular elliptic curve. We have $\text{End}(E(j)) \cong \mathcal{O}'(q,r')$ if $\frac{1+\pi}{2} \in \text{End}(E(j))$ and $\text{End}(E(j)) \cong \mathcal{O}(q,r)$ if $\frac{1+\pi}{2} \notin \text{End}(E(j))$ for some $q$ satisfying (\ref{e1}). Especially, we have $\text{End}(E(0)) \cong \mathcal{O}(3,1)$. By Lemma 1.8 and Proposition 2.1 of \cite{MR683249}, the following proposition holds.

\begin{proposition}\label{p1}(\cite[Lemma 2.9]{LI2020101619})
Suppose $q_{1} \neq q_{2}$ are primes satisfying $(\ref{e1})$. Let $K=\mathbb{Q}(\sqrt{-p})$. Suppose $q_{1}$ and $q_{2}$ have prime decompositions $q_{1} O_{K}=\mathfrak{q}_{1} \overline{\mathfrak{q}}_{1}$ and $q_{2} O_{K}=\mathfrak{q}_{2} \overline{\mathfrak{q}}_{2}$.

  (1) $\mathcal{O}(q_{1},r_1) \cong \mathcal{O}'(q_{2},r_2)$ if and only if $\left|\mathcal{O}\left(q_{1},r_1\right)^{\times}\right|=\left|\mathcal{O}^{\prime}\left(q_{2},r_2\right)^{\times}\right|=4$.

  (2) $\mathcal{O}^{\prime}\left(q_{1},r_1\right) \cong \mathcal{O}^{\prime}\left(q_{2},r_2\right)$ if and only if either $\mathfrak{q}_{1}\mathfrak{q}_{2} \in P(2)$ or $\mathfrak{q}_{1} \overline{\mathfrak{q}}_{2} \in P(2)$.

  (3) $\mathcal{O}\left(q_{1},r_1\right) \cong \mathcal{O}\left(q_{2},r_2\right)$ if and only if either $\mathfrak{q}_{1} \mathfrak{q}_{2} \in$ $P_{\mathbb{Z}}(2)$ or $\mathfrak{q}_{1} \overline{\mathfrak{q}}_{2} \in P_{\mathbb{Z}}(2)$.

\end{proposition}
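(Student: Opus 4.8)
\section*{Proof proposal for Proposition \ref{p1}}

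The plan is to translate the three ring-isomorphism relations into equalities of ideal classes in $K=\mathbb{Q}(\sqrt{-p})$, and then to read off (1)--(3) from Ibukiyama's explicit computations (Lemma~1.8 and Proposition~2.1 of \cite{MR683249}). First I would invoke the Skolem--Noether theorem: any ring isomorphism between two orders of $B_{p,\infty}$ is induced by conjugation by some $g\in B_{p,\infty}^{\times}$, so that $\mathcal{O}_1\cong\mathcal{O}_2$ exactly when $g\mathcal{O}_1 g^{-1}=\mathcal{O}_2$. Writing $B_{p,\infty}=K\oplus K\beta$ with $\alpha^2=-p$, $\beta^2=-q$, $\alpha\beta=-\beta\alpha$, I would note that every order here contains the embedded $\mathbb{Z}[\alpha]=\mathbb{Z}[\sqrt{-p}]$, that each $\mathcal{O}'(q,r')$ in addition contains $\frac{1+\alpha}{2}$ and hence the maximal order $O_K$, and that the two families therefore sit over the surface and the floor respectively. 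Conjugation by $\beta$ acts on $K$ as complex conjugation $\alpha\mapsto-\alpha$; tracking it shows that replacing $r$ by $-r$ (the other square root of $-p$ modulo $q$) yields an isomorphic order, so that the class of the prime $\mathfrak{q}\mid q$ enters only through the unordered pair $\{[\mathfrak{q}],[\overline{\mathfrak{q}}]\}$.

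The core step is to attach to each order a well-defined ideal class and to show it is a complete conjugacy invariant. For the surface family I would attach to $\mathcal{O}'(q,r')$ the class of $\mathfrak{q}$ in $C(O_K)\cong I(2)/P(2)$, and for the floor family the class of $\mathfrak{q}$ in the ring class group $C(\mathbb{Z}[\sqrt{-p}])\cong I(2)/P_{\mathbb{Z}}(2)$. Using Proposition~2.1 of \cite{MR683249} I would verify that conjugating $\mathcal{O}'(q_1,r_1)$ into $\mathcal{O}'(q_2,r_2)$ by a suitable $g\in K^{\times}$ (or by $g\in K^{\times}\beta$, which flips $\mathfrak{q}$ to $\overline{\mathfrak{q}}$) is possible precisely when the two classes agree up to inversion, and similarly on the floor. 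As a sanity check these counts are consistent with the class numbers $h(-p)$ on the surface and $h(-4p)$ on the floor supplied by the class-number formula of \cite[Corollary 7.28]{MR3236783}, after folding by the involution $[\mathfrak{q}]\mapsto[\mathfrak{q}]^{-1}$.

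Granting this, statements (2) and (3) are immediate bookkeeping. Since $\mathfrak{q}_2\overline{\mathfrak{q}}_2=q_2 O_K$ is principal and prime to $2$, the relation $[\mathfrak{q}_1]=[\mathfrak{q}_2]$ in $C(O_K)$ is equivalent to $\mathfrak{q}_1\overline{\mathfrak{q}}_2\in P(2)$, while $[\mathfrak{q}_1]=[\mathfrak{q}_2]^{-1}=[\overline{\mathfrak{q}}_2]$ is equivalent to $\mathfrak{q}_1\mathfrak{q}_2\in P(2)$; since the surface invariant determines the conjugacy class only up to this inversion, $\mathcal{O}'(q_1,r_1)\cong\mathcal{O}'(q_2,r_2)$ holds iff $\mathfrak{q}_1\mathfrak{q}_2\in P(2)$ or $\mathfrak{q}_1\overline{\mathfrak{q}}_2\in P(2)$, which is (2). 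Repeating the computation verbatim in $I(2)/P_{\mathbb{Z}}(2)$ gives (3). (For (1) and (2) one of course assumes $p\equiv3\pmod4$, so that the $\mathcal{O}'$ family is defined.)

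For (1) I would argue that a surface order and a floor order are isomorphic only in a single exceptional case, detected by the unit group. Since $|\mathcal{O}^{\times}|$ is an isomorphism invariant, the condition $|\mathcal{O}(q_1,r_1)^{\times}|=|\mathcal{O}'(q_2,r_2)^{\times}|=4$ is necessary. For sufficiency I would use Lemma~1.8 of \cite{MR683249}: a maximal order has $|\mathcal{O}^{\times}|=4$ exactly when it admits an embedding of $\mathbb{Z}[i]$, i.e.\ it is the order attached to $j=1728$, and this single order carries both an optimal embedding of $O_K$ (presenting it as some $\mathcal{O}'$) and an optimal embedding of $\mathbb{Z}[\sqrt{-p}]$ that does not extend to $O_K$ (presenting it as some $\mathcal{O}$). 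For a generic order, with $|\mathcal{O}^{\times}|=2$, the embedding-number count forces the ``contains $O_K$'' property to be an isomorphism invariant, so the surface and floor presentations are genuinely distinct and no cross-isomorphism occurs. I expect this last point---showing that outside the exceptional unit group the surface and floor families are disjoint, where Ibukiyama's optimal-embedding count does the real work---to be the main obstacle; once the ideal-class invariant of the second paragraph is shown to be complete, the translations yielding (2) and (3) are routine.
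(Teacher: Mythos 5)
The paper gives no proof of this proposition: it is imported verbatim from Lemma~2.9 of \cite{LI2020101619}, with the one-line attribution that it follows from Lemma~1.8 and Proposition~2.1 of \cite{MR683249}. Your outline is a faithful reconstruction of exactly that derivation --- Skolem--Noether to pass from isomorphism to conjugacy, the ideal class of $\mathfrak{q}$ (up to the inversion induced by conjugation by $\beta$) as the complete invariant in $I(2)/P(2)$ resp.\ $I(2)/P_{\mathbb{Z}}(2)$, and Ibukiyama's unit-group/optimal-embedding analysis for the exceptional cross-isomorphism in part (1) --- so it matches the intended proof rather than offering a different route; the only slip is cosmetic: for $p\equiv 1\pmod 4$ the conductor-$2$ order whose ring class group is $I(2)/P_{\mathbb{Z}}(2)$ is $\mathbb{Z}[2\sqrt{-p}]$, not $\mathbb{Z}[\sqrt{-p}]=O_K$.
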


\begin{remark}
For $p \equiv 3 \pmod 4$ and $p>3$, if $E$ is a supersingular elliptic curve defined over $\mathbb{F}_p$, then $|\operatorname{End}(E)^{\times}|=4$ if and only if $j(E)=1728$.
\end{remark}

For $p \equiv 3 \pmod 4$, $\mathfrak{q}_1 \mathfrak{q}_2 \in P(2)$ (resp. $\mathfrak{q}_{1} \overline{\mathfrak{q}}_{2} \in P(2)$) if and only if $\bar{\mathfrak{q}}_1$ and $\mathfrak{q}_2$ (resp. $\mathfrak{q}_1$ and $\mathfrak{q}_2$) are equivalent in the class group $C(-p)$ which means $q_1$ and $q_2$ can be represented by the same reduced forms with discriminant $-p$. Note that $q_i$ can be represented by  a form $f$ if and only if it can be represented by $f^{-1}$. We have the following corollary.

\begin{corollary} \label{c1}
Notations being as Proposition \ref{p1}.

(1) $\mathcal{O}'(q_{1},r_1) \cong \mathcal{O}'(q_{2},r_2)$ if and only if $q_1$ and $q_2$ can be represented by the same reduced forms with discriminant $-p$.

(2) $\mathcal{O}(q_{1},r_1) \cong \mathcal{O}(q_{2},r_2)$ if and only if $q_1$ and $q_2$ can be represented by the same reduced forms with discriminant $-16p$ (resp. $-4p$) for $p \equiv 1 \pmod 4$ (resp. $p \equiv 3 \pmod 4$).
\end{corollary}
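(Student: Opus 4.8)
The plan is to deduce both parts directly from Proposition \ref{p1} by translating its ideal-theoretic conditions into statements about which reduced forms represent $q_1$ and $q_2$, using Proposition \ref{t3} together with the canonical identifications of $I(2)/P(2)$ and $I(2)/P_{\mathbb{Z}}(2)$ with class groups of orders. For part (1), recall that $\mathcal{O}'$ is defined only when $p \equiv 3 \pmod 4$, so $O_K=\mathbb{Z}[(1+\sqrt{-p})/2]$ has discriminant $-p$ and $I(2)/P(2)\cong C(O_K)=C(-p)$. Since $\mathfrak{q}_i\overline{\mathfrak{q}}_i=q_iO_K$ is principal, $[\overline{\mathfrak{q}}_i]=[\mathfrak{q}_i]^{-1}$ in $C(-p)$, so the conditions $\mathfrak{q}_1\mathfrak{q}_2\in P(2)$ and $\mathfrak{q}_1\overline{\mathfrak{q}}_2\in P(2)$ of Proposition \ref{p1}(2) read $[\mathfrak{q}_2]=[\mathfrak{q}_1]^{-1}$ and $[\mathfrak{q}_1]=[\mathfrak{q}_2]$ respectively. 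By Proposition \ref{t3}(3) the split prime $q_i$, being the norm of both $\mathfrak{q}_i$ and $\overline{\mathfrak{q}}_i$, is represented exactly by the reduced forms in the classes $[\mathfrak{q}_i]$ and $[\mathfrak{q}_i]^{-1}$; using that $q_i$ is represented by $f$ iff by $f^{-1}$, this set of forms is the unordered pair $\{[\mathfrak{q}_i],[\mathfrak{q}_i]^{-1}\}$. Hence $q_1,q_2$ are represented by the same reduced forms of discriminant $-p$ iff $\{[\mathfrak{q}_1],[\mathfrak{q}_1]^{-1}\}=\{[\mathfrak{q}_2],[\mathfrak{q}_2]^{-1}\}$, i.e. $[\mathfrak{q}_1]=[\mathfrak{q}_2]$ or $[\mathfrak{q}_1]=[\mathfrak{q}_2]^{-1}$, which is exactly the disjunction above. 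This is precisely the content of the paragraph preceding the corollary, so (1) follows.

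For part (2) I would rerun the same argument with $P(2)$ replaced by $P_{\mathbb{Z}}(2)$. The key structural input is that $I(2)/P_{\mathbb{Z}}(2)$ is canonically isomorphic to $C(O')$, where $O'=\mathbb{Z}+2O_K$ is the order of conductor $2$ in $O_K$, and that this isomorphism is realized by the norm-preserving contraction $\mathfrak{a}\mapsto\mathfrak{a}\cap O'$ on ideals prime to $2$. The discriminant of $O'$ is $4D_K$, which is $-16p$ when $p\equiv 1\pmod 4$ (so $D_K=-4p$) and $-4p$ when $p\equiv 3\pmod 4$ (so $D_K=-p$); this is exactly the case split in the statement. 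Since $q_i\equiv 3\pmod 8$ is odd it is prime to the conductor, so $\mathfrak{q}_i\cap O'$ is a proper $O'$-ideal of norm $q_i$, and the conditions $\mathfrak{q}_1\mathfrak{q}_2\in P_{\mathbb{Z}}(2)$, $\mathfrak{q}_1\overline{\mathfrak{q}}_2\in P_{\mathbb{Z}}(2)$ of Proposition \ref{p1}(3) become $[\mathfrak{q}_2\cap O']=[\mathfrak{q}_1\cap O']^{-1}$ and $[\mathfrak{q}_1\cap O']=[\mathfrak{q}_2\cap O']$ in $C(O')$. Applying Proposition \ref{t3}(3) to the order $O'$ exactly as before identifies these with the statement that $q_1$ and $q_2$ are represented by the same reduced forms of discriminant $4D_K$, yielding (2).

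The routine verifications are immediate: $q_i$ splits in $O_K$ because $q_i\equiv 3\pmod 4$ and $\left(\frac{p}{q_i}\right)=-1$ force $\left(\frac{-p}{q_i}\right)=1$; and the forms representing a split prime are exactly the conjugate pair attached to its two prime divisors. The main obstacle I anticipate is purely bookkeeping: pinning down that $P_{\mathbb{Z}}(2)$ corresponds to the conductor-$2$ order $O'$ rather than to $O_K$ itself, tracking the discriminant $4D_K$ correctly through the two congruence cases for $p$, and invoking the fact that $\mathfrak{a}\mapsto\mathfrak{a}\cap O'$ preserves ideal norms so that representability of $q_i$ is genuinely governed by the class of $\mathfrak{q}_i\cap O'$. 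Once this identification is fixed, both parts are formal consequences of Proposition \ref{p1} and Proposition \ref{t3}.
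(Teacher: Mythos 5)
Your proof is correct and follows essentially the same route as the paper: the paper's justification is the short paragraph immediately preceding the corollary, which translates the ideal conditions of Proposition \ref{p1} into equivalences in $C(-p)$ (resp. the class group of the conductor-$2$ order) and then into representability of $q_1,q_2$ by the same reduced forms via Proposition \ref{t3}, using that $q_i$ is represented by $f$ iff by $f^{-1}$. You simply spell out more explicitly than the paper the identification $I(2)/P_{\mathbb{Z}}(2)\cong C(O')$ and the discriminant bookkeeping ($-16p$ vs.\ $-4p$) for part (2), which the paper leaves implicit.
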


\section{Endomorphism rings and binary quadratic forms}

By Ibukiyama's results \cite{MR683249}, there is a correspondence between supersingular $j$-invariants over $\mathbb{F}_p$ and maximal orders $\mathcal{O}(q,r)$'s and $\mathcal{O}'(q,r')$'s.

In this section, we will establish a correspondence between $\mathbb{F}_p$-isomorphism classes of supersingular elliptic curves and reduced quadratic forms with discriminant $-p$ or $-16p$.


We study the number of $\mathbb{F}_p$-roots of Hilbert class polynomials. Note that Hilbert class polynomials are class polynomials in \cite[\S 13]{MR3236783}. Let $D$ be the discriminant of an imaginary quadratic order $O$. The Hilbert class polynomial $H_D(X)$ is defined as
$$H_D(X):=\prod_{j(E) \in \text{Ell}_O(\mathbb{C})}(X-j(E)),$$
where $\text{Ell}_O(\mathbb{C})=\{j(E) \mid \text{End}(E/\mathbb{C}) \cong O\}$.
Write
$$J(D_1,D_2)=\prod_{\substack{[\tau_1],[\tau_2] \\ \text{disc}(\tau_i)=D_i}}(j(\tau_1)-j(\tau_2)),$$
where $[\tau_i]$ runs over all elements of the upper half-plane with discriminant $D_i$ modulo $\text{SL}_2(\mathbb{Z})$. We can view $J(D_1,D_2)$ as the resultant of the Hilbert class polynomials $H_{D_1}(X)$ and $H_{D_2}(X)$.

Gross and Zagier \cite{MR772491} studied the prime factorizations of $J(D_1,D_2)$ where $D_1$ and $D_2$ are two fundamental discriminants which are relatively prime. Recently, Lauter and Viray \cite{MR3431591} studied the prime factorizations of $J(D_1,D_2)$ for arbitrary $D_1$ and $D_2$.

\begin{lemma}\label{l1}
Let $p \equiv 3 \pmod 4$ be a prime. For a positive integer $1 \le a < p/4$, if $\left( \frac{a}{p} \right )=1$ and the equation $x^2 \equiv -p \pmod{4a}$ is solvable, then the Hilbert class polynomial $H_{-4a}(X) $ has $\mathbb{F}_p$-roots. Moreover, if $b$ is a solution of $x^2 \equiv -p \pmod{4a}$ with $-a<b\le a$, then the Hilbert class polynomials $H_{-4a}(X)$ and $H_{-4c}(X)$ ($c=\frac{b^2+p}{4a}$) have only one common root in $\mathbb{F}_p$.
\end{lemma}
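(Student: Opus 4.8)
My plan is to deduce both assertions from the ``moreover'' statement. Given that $x^2\equiv-p\pmod{4a}$ is solvable, I choose a solution and reduce it modulo $2a$ into the range $-a<b\le a$ (which preserves the congruence modulo $4a$), and put $c=(b^2+p)/(4a)$; then $c$ is a positive integer, $(a,b,c)$ is a positive definite form of discriminant $b^2-4ac=-p$, and reduction modulo $4$ shows $b$ is odd. The first claim then follows from the second, since the unique common root is in particular a root of $H_{-4a}$ lying in $\mathbb{F}_p$. To identify the reduction type I compute Legendre symbols: since $p\equiv3\pmod4$ we have $\left(\frac{-a}{p}\right)=-\left(\frac{a}{p}\right)=-1$, while $p\nmid ac$ together with $4ac\equiv b^2\pmod p$ forces $\left(\frac{c}{p}\right)=\left(\frac{a}{p}\right)=1$, hence $\left(\frac{-c}{p}\right)=-1$. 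Thus $p$ is inert in each of $\mathbb{Q}(\sqrt{-a})$ and $\mathbb{Q}(\sqrt{-c})$, and Deuring's theory of supersingular reduction of CM curves shows that every root of $H_{-4a}(X)$ and $H_{-4c}(X)$ modulo $p$ is a supersingular $j$-invariant in $\mathbb{F}_{p^2}$.

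The core of the proof is a quaternionic identity. By the Deuring correspondence a supersingular $j_0$ is a common root precisely when $\operatorname{End}(E_{j_0})$ admits optimal embeddings of $\mathbb{Z}[\sqrt{-a}]$ and $\mathbb{Z}[\sqrt{-c}]$; let $\gamma,\delta\in\operatorname{End}(E_{j_0})$ be the trace-zero images, so $\gamma^2=-a$, $\delta^2=-c$, and set $n:=\gamma\delta+\delta\gamma=\operatorname{Trd}(\gamma\delta)\in\mathbb{Z}$. Then the ``glue vector'' $\pi_n:=2\gamma\delta-n$ is trace-zero with $\pi_n^2=n^2-4ac$ and $N(\pi_n)=4ac-n^2$; note that $N(\pi_n)\ge0$ forces $n^2\le4ac=b^2+p$, and that $N(\pi_n)$ equals the Gross--Zagier quantity $\frac{(-4a)(-4c)-(2n)^2}{4}$. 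The decisive point is that $N(\pi_n)=p$ exactly when $n=\pm b$, in which case $\pi_n^2=-p$; then $\mathbb{Z}[\sqrt{-p}]\hookrightarrow\operatorname{End}(E_{j_0})$, so $E_{j_0}$ is defined over $\mathbb{F}_p$ and $j_0\in\mathbb{F}_p$. The lattice $\mathbb{Z}\gamma+\mathbb{Z}\delta$ then carries the norm form $(a,-b,c)$, orthogonal to $\pi_n$, which reflects the binary-form correspondence of this section.

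It remains to count. I would apply the formula of Lauter and Viray for $v_p\big(J(-4a,-4c)\big)$, the $p$-adic valuation of the resultant of $H_{-4a}$ and $H_{-4c}$, which records the common roots modulo $p$; in it the prime $p$ is supported exactly on the indices $n$ with $p\mid N(\pi_n)=b^2+p-n^2$, i.e.\ $n^2\equiv b^2\pmod p$. This is where the hypothesis $a<p/4$ enters decisively: combined with $n^2\le b^2+p$ it gives $|n|\le\sqrt{b^2+p}<3p/4$ and $|b|<p/4$, so $p\mid(n-b)(n+b)$ with $|n\pm b|<p$ forces $n=\pm b$. Hence the only glue vectors whose norm is divisible by $p$ have norm exactly $p$ and equal $\pm\sqrt{-p}$, producing a single $\mathbb{F}_p$-rational common root $j_0$ and exhausting $v_p(J)$. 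This yields exactly one common root of $H_{-4a}$ and $H_{-4c}$ in $\mathbb{F}_p$, and with it the first assertion.

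I expect the main obstacle to be the multiplicity bookkeeping in this last step. As $-4a$ and $-4c$ are neither fundamental nor coprime, one must use the general (non-fundamental) form of the Lauter--Viray formula and pin down the precise local factor at $p$ attached to $n=\pm b$, so as to conclude that it corresponds to a single common root counted once, not to one root of higher multiplicity or to several distinct points; the same input is needed to guarantee that the contribution is nonzero, i.e.\ that the embedding is realized by some maximal order of $B_{p,\infty}$ (existence of $j_0$), and that no common root survives in $\mathbb{F}_{p^2}\setminus\mathbb{F}_p$. Verifying that this weighted count collapses to exactly one is, I expect, the real work of the proof.
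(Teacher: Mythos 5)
Your proposal is correct and, on the central ``moreover'' claim, it runs along the same track as the paper: both arguments reduce the count of common roots to the $p$-adic valuation of $J(-4a,-4c)$ via Lauter--Viray \cite{MR3431591}, and both hinge on the observation that $p \mid (4a\cdot 4c - x^2)$ with $x^2 \le 16ac$ forces $x = \pm 2b$ --- your estimate $|n|\le\sqrt{b^2+p}<3p/4$, $|b|<p/4$, hence $|n\pm b|<p$, is exactly the needed size argument, and you spell it out more carefully than the paper, which asserts the implication without proof. Your quaternionic unpacking (glue vectors $\pi_n$ with $N(\pi_n)=4ac-n^2$, the embedding of $\mathbb{Z}[\sqrt{-p}]$ at $n=\pm b$ forcing $j_0\in\mathbb{F}_p$) is essentially the mechanism inside the cited theorem rather than an alternative to it. The one genuinely different choice is your treatment of the first assertion: you deduce the existence of $\mathbb{F}_p$-roots of $H_{-4a}$ from the common-root count, which makes everything depend on the \emph{lower} bound $v_p(J)\ge 1$ and on the local factor at $n=\pm b$ contributing exactly one root --- precisely the ``multiplicity bookkeeping'' you flag as the real remaining work, and precisely where the paper simply invokes Theorem 1.5 of \cite{MR3431591} to get $p\parallel J(-4a,-4c)$. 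The paper instead proves existence independently, citing its companion result \cite{IJNT}: for $4<4a<p$ the solvability of $x^2\equiv -p\pmod{4a}$ is equivalent to $p$ splitting completely in the maximal real subfield of the genus field of $\mathbb{Q}(\sqrt{-4a})$, which is equivalent to $H_{-4a}$ having an $\mathbb{F}_p$-root. That independent input is what your route gives up; in exchange you avoid a second external criterion, at the cost of having to verify that the Lauter--Viray local term at $n=\pm b$ is nonzero and contributes multiplicity exactly one (including the unit normalization in the separate case $a=1$, $b=a$, $j=1728$, which the paper also splits off). Neither write-up actually carries out that verification in detail, so your proposal is not weaker than the paper's proof on this point --- it just concentrates all the weight there.
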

\begin{proof}
If $a=1$  and $p \equiv 3 \pmod 4$, then the Hilbert class polynomial $H_{-4}(X)$ has only one root $1728$ and $b=1$ is a solution of the equation $x^2 \equiv -p \pmod{4}$. In this case, $c=\frac{p+1}{4}$ and we can show that $H_{-4}(X)$ and $H_{-(p+1)}(X)$ have only one common root in $\mathbb{F}_p$ by \cite[Theorem 1.5]{MR3431591}. In the following, we assume $1<a<p/4$.

If $2 \mid a$, then $p \equiv 7 \pmod 8$ and $\left( \frac{2}{p} \right )=1$. For any odd prime $\ell \mid a$, $x^2 \equiv -p \pmod{4a}$ is solvable if and only if $\left( \frac{-p}{\ell} \right )=\left( \frac{\ell}{p} \right )=1$.

Since $4a<p$, the prime $p$ does not divide the discriminant of $H_{-4a}(X)$ and \cite[Theorem 1.1]{IJNT} holds. Let $F$ be the genus field of $\mathbb{Q}(\sqrt{-4a})$ and $E=F\cap \mathbb{R}$ the maximal real subfield of $F$. We have that $x^2 \equiv -p \pmod{4a}, 4<4a < p  $ is solvable if and only if $p$ splits completely in $E$. It follows that the Hilbert class polynomial $H_{-4a}(X)$ has $\mathbb{F}_p$-roots.

If $-a<b\le a$ is a solution of $x^2 \equiv -p \pmod{4a}$ and $c=\frac{b^2+p}{4a}$, then the equation $x^2 \equiv -p \pmod{4c}$ is solvable. Moreover, we have $p>4c$ since $4<4a<p$, so the Hilbert class polynomial $H_{-4c}(X)$ has $\mathbb{F}_p$-roots.

We have $p \mid (4a\cdot 4c-x^2)$ if and only if $x=\pm 2b$. By \cite[Theorem 1.5]{MR3431591}, we have $p \parallel J(-4a,-4c)$. It follows that $H_{-4a}(X)$ and $H_{-4c}(X)$ have only one common root in $\mathbb{F}_p$.

\end{proof}

\begin{remark}
Let $(a,b,c)$ be a reduced form with discriminant $-p$. We have that $H_{-4a}(X)$ and $H_{-4c}(X)$ have only one common $\mathbb{F}_p$-root $j$. Moreover, $4a=|D_1|$ is the smallest positive integer such that $j$ is a $\mathbb{F}_p$-root of $H_{D_1}(X)$, and $4c=|D_2|$ is the smallest positive integer such that $H_{-4a}(X)$ and $H_{D_2}(X)$ have a common $\mathbb{F}_p$-root $j$.
\end{remark}

Let $p \equiv 3 \pmod 4$ be a prime. Let $E$ be a supersingular elliptic curve over $\mathbb{F}_p$ with $\operatorname{End}(E) \cong \mathcal{O}'(q,r')$. If $j(E)$ is a $\mathbb{F}_p$-root of the Hilbert class polynomial $H_D(X)$ with $|D|<p$, then there exist two forms $(q,\pm r',\frac{(r')^2+p}{4q})$ which can represent $|D|/4$ by the proof of \cite[Theorem 1.3]{IJNT}.

\begin{theorem}\label{zt1}
Let $p \equiv 3 \pmod 4$ be a prime with $p\ge 7$. Given a reduced form with discriminant $-p$ which can represent a prime $q$ satisfying (\ref{e1}), there exists a unique supersingular $j$-invariant $j \in \mathbb{F}_p$ such that $\operatorname{End}(E(j)) \cong \mathcal{O}'(q,r')$. Moreover, the inverse form corresponds to the same supersingular $j$-invariant.
\end{theorem}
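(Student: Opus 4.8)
The plan is to manufacture $j$ explicitly from the form by means of Hilbert class polynomials, to verify that $\operatorname{End}(E(j))\cong\mathcal{O}'(q,r')$, and then to read off uniqueness and the behaviour under inversion.

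First I would normalize the data and check that Lemma~\ref{l1} applies. Write the reduced form as $(a,b,c)$ with $b^{2}-4ac=-p$. Since $\left(\frac{p}{q}\right)=-1$ forces $q\neq p$, the prime $q$ is coprime to $p$, so any representation of $q$ by $(a,b,c)$ is primitive and $(a,b,c)$ is equivalent to $\bigl(q,r',(r'^{2}+p)/4q\bigr)$ with $r'^{2}+p\equiv0\pmod{4q}$; this $r'$ is exactly an Ibukiyama parameter, so the form determines $\mathcal{O}'(q,r')$ up to isomorphism. The relation $b^{2}-4ac=-p$ gives $b^{2}\equiv-p\pmod{4a}$, so $x^{2}\equiv-p\pmod{4a}$ is automatically solvable. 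Because $-p$ is a fundamental discriminant divisible by the single prime $p$, the genus number of discriminant $-p$ is $1$, hence every value coprime to $p$ represented by a form of discriminant $-p$ is a quadratic residue modulo $p$; in particular $\left(\frac{a}{p}\right)=1$. Finally, for $p\ge 7$ a reduced form of discriminant $-p$ satisfies $a\le\sqrt{p/3}<p/4$. Thus all hypotheses of Lemma~\ref{l1} hold.

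Next I would construct and identify $j$. By Lemma~\ref{l1}, $H_{-4a}(X)$ and $H_{-4c}(X)$ have a unique common root $j\in\mathbb{F}_{p}$; set $E=E(j)$. Since $\left(\frac{-4a}{p}\right)=\left(\frac{-1}{p}\right)\left(\frac{a}{p}\right)=-1$, this $j$ is supersingular, and as a root of $H_{-4a}$ with $4a<p$ it lies on the surface, so $\operatorname{End}(E)\cong\mathcal{O}'(\tilde q,\tilde r)$ for some prime $\tilde q$ satisfying~(\ref{e1}). To see $\tilde q$ gives the same order as $q$, I would use the remark preceding the theorem: as $j$ is an $\mathbb{F}_{p}$-root of $H_{-4a}$ with $4a<p$, the form attached to $\mathcal{O}'(\tilde q,\tilde r)$ represents $a$; and by the minimality recorded in the remark after Lemma~\ref{l1}---$4a$ is the least $|D|$ with $H_{D}(j)=0$, and $4c$ the least $|D|$ for which $H_{-4a}$ and $H_{D}$ share the root $j$---the value $a$ is the minimum represented by that form and $c$ is the next. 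Hence the reduced form attached to $\mathcal{O}'(\tilde q,\tilde r)$ is $(a,b,c)$, which represents $q$; as it also represents $\tilde q$, Corollary~\ref{c1}(1) yields $\mathcal{O}'(\tilde q,\tilde r)\cong\mathcal{O}'(q,r')$, i.e.\ $\operatorname{End}(E(j))\cong\mathcal{O}'(q,r')$.

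For uniqueness I would invoke Deuring's correspondence: the isomorphism type of the maximal order $\operatorname{End}(E(j'))$ determines the $\operatorname{Gal}(\mathbb{F}_{p^{2}}/\mathbb{F}_{p})$-orbit $\{j',j'^{p}\}$. If $j'\in\mathbb{F}_{p}$ also satisfies $\operatorname{End}(E(j'))\cong\mathcal{O}'(q,r')$, then $j'$ lies in the same orbit as $j$; both being fixed by Frobenius, the orbits are singletons and $j'=j$. For the inverse form, note that $(a,-b,c)$ represents the same integers as $(a,b,c)$ via $Y\mapsto-Y$, in particular the same prime $q$, and it shares the coefficients $a$ and $c$; it therefore produces the identical pair $H_{-4a},H_{-4c}$ and hence the same common root $j$ (equivalently, the same order by Corollary~\ref{c1}(1)).

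The hardest step is the identification in the third paragraph: bridging the purely existential input of Deuring and Ibukiyama with the explicit $j$ cut out by the Hilbert class polynomials, i.e.\ proving that the reduced form read off from $\operatorname{End}(E(j))$ is exactly the given $(a,b,c)$. This hinges on the minimality characterization of the pair $(a,c)$ through the discriminants $-4a,-4c$, which itself rests on Lemma~\ref{l1} and ultimately on the factorization $p\parallel J(-4a,-4c)$ from Lauter--Viray guaranteeing a single common root; keeping the size conditions $4a,4c<p$ under control so that this factorization is applicable (with the degenerate case $a=1$, where $4c=p+1$, handled separately) is the delicate part.
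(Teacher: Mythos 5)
Your overall skeleton matches the paper's: apply Lemma~\ref{l1} to obtain the unique common $\mathbb{F}_p$-root $j$ of $H_{-4a}$ and $H_{-4c}$, identify $\operatorname{End}(E(j))$ with $\mathcal{O}'(q,r')$ via the minimality of $4a$ and $4c$, and finish with Deuring for uniqueness and $Y\mapsto -Y$ for the inverse form. But there is a genuine gap at the pivotal step: you assert that ``as a root of $H_{-4a}$ with $4a<p$ it lies on the surface,'' and everything downstream (the appeal to the remark preceding the theorem, hence the identification of the reduced form of $\mathcal{O}'(\tilde q,\tilde r)$ with $(a,\pm b,c)$) presupposes exactly this. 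No justification is given, and the assertion is false for an arbitrary root of $H_{-4a}$ with $4a<p$ and $\left(\frac{a}{p}\right)=1$: being a root of $H_{-4a}$ only provides an optimal embedding of the order of discriminant $-4a$ into the quaternion order $\operatorname{End}(E)$, and says nothing about whether $(1+\pi)/2\in\operatorname{End}(E)$. Concretely, for $p=83$ the floor curve $E_2$ of Example~\ref{ex1} has associated form $(11,-6,31)$ of discriminant $-16\cdot 83$, which primitively represents $36$ via $(x,y)=(1,1)$; by the correspondence between primitive representations and optimal embeddings that underlies Kaneko's theorem (the very result the paper invokes), $j(E_2)$ is a root of $H_{-36}=H_{-4\cdot 9}$ modulo $83$, even though $36<83$ and $\left(\frac{9}{83}\right)=1$.

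What actually forces $j$ onto the surface is the \emph{pair} condition --- that $j$ is a common root of $H_{-4a}$ and $H_{-4c}$ with $(4b)^2-4(4a)(4c)=-16p$ --- and this is where the paper does its real work: assuming $\operatorname{End}(E(j))\cong\mathcal{O}(q,r)$, Kaneko's theorem makes the primitive form $\left(q,4r,\frac{4r^2+4p}{q}\right)$ of discriminant $-16p$ represent both $4a$ and $4c$; the minimality of $4a$ and $4c$ identifies $\sqrt{4a}$ and $\sqrt{4c}$ as its successive minima, so by Buchmann--Vollmer it would be equivalent to $(4a,\pm 4b,4c)$, which is imprimitive --- a contradiction. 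You need to supply this (or an equivalent) argument. Once it is in place, the rest of your write-up --- the genus-theory verification that $\left(\frac{a}{p}\right)=1$, the successive-minima identification in the $\mathcal{O}'$ case, Deuring for uniqueness, and the treatment of the inverse form --- goes through, and is in fact more explicit than the paper on the uniqueness claim.
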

\begin{proof}
Let $(a,b,c)$ be a reduced quadratic form with discriminant $-p$ which can represent a prime $q$ satisfying (\ref{e1}). Since $b^2-4ac=-p$ then $1 \le a < \sqrt{p/3} < p/4$, and we have that $H_{-4a}(X)$ mod $p$ and $H_{-4c}(X)$ mod $p$ have a unique common root $j \in \mathbb{F}_p$ by Lemma \ref{l1}. By Ibukiyama's theorems, the endomorphism ring of $E(j)$ is isomorphic to the maximal order $\mathcal{O}(q_j,r_j)$ or $\mathcal{O}'(q_j,r'_j)$ for some $q_j$ satisfying (\ref{e1}). For simplicity, we can assume $q_j=q$.

If $a=1$, then $j=1728$ and we can assume $\operatorname{End}(E(1728)) \cong \mathcal{O}'(q,r') \cong \mathcal{O}(q,r)$. In the following, we assume $1<a<c<p/4$.

Suppose $\operatorname{End}(E(j)) \cong \mathcal{O}(q,r)$. Since $j$ is the common root of $H_{-4a}(X)$ mod $p$ and $H_{-4c}(X)$ mod $p$, we have that the form $(q,4r,\frac{4r^2+4p}{q})$ can represent $4a$ and $4c$ by the proof of Theorem 1 in \cite{MR1040429}. Moreover, the number $4a=|D_1|$ is the smallest positive integer such that $j$ is a $\mathbb{F}_p$-root of $H_{D_1}(X)$, and the number $4c=|D_2|$ is the smallest positive integer such that $H_{D_1}(X)$ mod $p$ and $H_{D_2}(X)$ mod $p$ have a common $\mathbb{F}_p$-root $j$. The first (resp. second) successive minimum of $(q,4r,\frac{4r^2+4p}{q})$ is $\sqrt{4a}$ (resp. $\sqrt{4c}$). By Proposition 5.7.3 and Theorem 5.7.6 in \cite{MR2300780}, the form $(q,4r,\frac{4r^2+4p}{q})$ is equivalent to the form $(4a,4b,4c)$ or $(4a,-4b,4c)$, which is a contradiction. We must have $\operatorname{End}(E(j)) \cong \mathcal{O}'(q,r')$.

So, we have $\text{End}(E(j)) \cong \mathcal{O}'(q,r')$. Obviously, the inverse form $(a,-b,c)$ corresponds the same supersingular $j$-invariant.

\end{proof}


For the maximal orders $\mathcal{O}(q,r)$'s, a similar result also holds, while the corresponding forms are of discriminant $-16p$.

We review some facts about genera of positive definite quadratic forms. Let $p>3$ be a prime. The primitive forms with discriminant $-16p$ have two assigned characters $\chi(a)= \left( \frac{a}{p} \right)$ and $\delta(a)=(-1)^{(a-1)/2}$. Furthermore, the primitive forms with discriminant $-16p$ can be divided into two genera (see \cite[Theorem 3.15]{MR3236783}). The genus which contains the principal form $(1,0,4p)$ is called the principal genus, and we call the other non-principal genus.

Let $(a,b,c)$ be a reduced form with discriminant $-16p$ in the non-principal genus. If $a$ (resp. $c$) is odd, then $a\equiv 3 \pmod 4$ (resp. $c\equiv 3 \pmod 4$). If $a$ is even, we claim $4\mid a$. If $a \equiv 2 \pmod 4$, then $8 \mid b^2(=4ac-16p)$. It follows $4 \mid b$ and $2 \mid c$. This is a contradiction since the form $(a,b,c)$ is primitive. Similarly, if $c$ is even, then $4 \mid c$.

Similar to Lemma \ref{l1}, we have the following lemma.

\begin{lemma}\label{l2}
Let $p$ be a prime and $a \equiv 0$ or $3 \pmod 4$. If $\left( \frac{-a}{p} \right)=-1$ and the equation $x^2 \equiv -16p \pmod{4a}$ is solvable, then the Hilbert class polynomial $H_{-a}(X)$ has $\mathbb{F}_p$-roots. Moreover, let $b$ be a solution of $x^2 \equiv -16p \pmod{4a}$ with $-a < b \le a$. If the form $(a,b,c)$ ($c=\frac{b^2+16p}{4a}$) is primitive, then the Hilbert class polynomials $H_{-a}(X)$ and $H_{-c}(X)$ have only one common root in $\mathbb{F}_p$.
\end{lemma}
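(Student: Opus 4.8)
The plan is to mirror the structure of the proof of Lemma~\ref{l1}, replacing the discriminant $-p$ theory with the discriminant $-16p$ theory, and to invoke the same two external tools: a splitting/genus-field criterion for when $H_{-a}(X)$ acquires an $\mathbb{F}_p$-root, and the Gross--Zagier--type prime-factorization result of Lauter and Viray \cite[Theorem 1.5]{MR3431591} to pin down the number of common roots. First I would verify that the hypotheses on $a$ are the correct analogue of those in Lemma~\ref{l1}: the condition $a \equiv 0$ or $3 \pmod 4$ guarantees $-a$ is a valid discriminant (i.e. $-a \equiv 0,1 \pmod 4$), and the condition $\left(\frac{-a}{p}\right)=-1$ is what will force $p$ to be inert or ramified in the relevant quadratic field, which is exactly what makes $j$-invariants of discriminant $-a$ reduce to supersingular $\mathbb{F}_p$-points. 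I would also record the size constraint implicitly needed, namely $a<p$ (which must follow from the intended application where $(a,b,c)$ is reduced of discriminant $-16p$, forcing $a \le \sqrt{16p/3}$), so that $p$ does not divide the discriminant of $H_{-a}(X)$ and the reduction-mod-$p$ arguments are valid.

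For the first assertion, that $H_{-a}(X)$ has $\mathbb{F}_p$-roots, I would argue exactly as in Lemma~\ref{l1}: solvability of $x^2 \equiv -16p \pmod{4a}$ is equivalent, via the genus theory of $\mathbb{Q}(\sqrt{-a})$, to $p$ splitting completely in the maximal real subfield $E = F \cap \mathbb{R}$ of the genus field $F$, and by \cite[Theorem 1.1]{IJNT} this is equivalent to $H_{-a}(X)$ having a root modulo $p$. The main point to check carefully here is that the congruence $x^2 \equiv -16p \pmod{4a}$, rather than $x^2 \equiv -p \pmod{4a}$, still translates into the same splitting condition; since $16$ is a square this should cause no difficulty away from the prime $2$, but I would treat the $2$-adic part of $a$ separately, using the case analysis already established in the excerpt (if $a$ is even then $4 \mid a$, and parity/quadratic-residue conditions at $2$ behave as in the $p \equiv 7 \pmod 8$ case of Lemma~\ref{l1}).

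For the second assertion, that $H_{-a}(X)$ and $H_{-c}(X)$ with $c = \frac{b^2+16p}{4a}$ share exactly one common $\mathbb{F}_p$-root, I would compute $J(-a,-c)$ via \cite[Theorem 1.5]{MR3431591}. The crucial step is to show that $p \parallel J(-a,-c)$, i.e. $p$ divides $J(-a,-c)$ exactly once. As in Lemma~\ref{l1}, I expect this to reduce to counting solutions $x$ of $p \mid (4a\cdot 4c - x^2)$, equivalently $x^2 \equiv 16ac \equiv b^2 \pmod p$ (since $4ac = b^2+16p$), which forces $x \equiv \pm b \pmod p$ and hence pins the multiplicity at one. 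The primitivity hypothesis on $(a,b,c)$ is what I expect to be essential here, guaranteeing that no extra factors of $p$ appear and that the single common root has multiplicity one rather than the form being imprimitive and contributing spuriously.

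The hardest part will be the bookkeeping at the prime $2$ and the exact-divisibility count $p \parallel J(-a,-c)$. Because the discriminant is now $-16p$ with a large power of $2$, the local conditions at $2$ in the Lauter--Viray formula are more delicate than in the $-p$ case, and I would need to confirm that the $2$-adic contribution to $J(-a,-c)$ does not absorb or create a factor of $p$, so that the exact divisibility by $p$ genuinely governs the number of common roots. I anticipate this is where one must use the non-principal-genus structure and the refined parity facts ($4 \mid a$ when $a$ is even, $a \equiv 3 \pmod 4$ when odd) recorded just before the lemma, together with $\left(\frac{-a}{p}\right)=-1$, to rule out any additional common reduction; everything else follows the template of Lemma~\ref{l1} essentially verbatim.
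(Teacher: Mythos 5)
Your proposal matches the paper's intent exactly: the paper gives no separate proof of Lemma~\ref{l2}, merely asserting it is ``similar to Lemma~\ref{l1}'', and your plan --- the genus-field splitting criterion of \cite[Theorem 1.1]{IJNT} for the existence of an $\mathbb{F}_p$-root (with the size constraint $a<p$ supplied by reducedness in the application), plus exact divisibility $p \parallel J(-a,-c)$ via \cite[Theorem 1.5]{MR3431591} for uniqueness of the common root --- is precisely that template. One small correction to your bookkeeping: for discriminants $-a$ and $-c$ the Lauter--Viray count concerns $p \mid (ac - x^2)$ with $|x| < \sqrt{ac}$, and since $4ac = b^2 + 16p$ forces $b$ to be even this gives exactly $x = \pm b/2$ (also note $16ac \equiv 4b^2$, not $b^2$, modulo $p$); the conclusion $p \parallel J(-a,-c)$ and hence the lemma are unaffected.
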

Let $E$ be a supersingular elliptic curve over $\mathbb{F}_p$ with $\operatorname{End}(E) \cong \mathcal{O}(q,r)$. If $j(E)$ is a $\mathbb{F}_p$-root of the Hilbert class polynomial $H_D(X)$ with $|D|<p$, then there exist two quadratic forms $(q,\pm 4r,\frac{4r^2+4p}{q})$ which can represent $|D|$.

\begin{theorem}\label{zt2}
Let $p \ge 5$ be a prime. Given a reduced form with discriminant $-16p$ in the non-principal genus which can represent a prime $q$ satisfying $(\ref{e1})$, there is a unique supersingular $j$-invariant in $\mathbb{F}_p$ with $\operatorname{End}(E(j)) \cong \mathcal{O}(q,r)$. Moreover, the inverse form corresponds to the same supersingular $j$-invariant.
\end{theorem}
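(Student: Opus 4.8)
The plan is to mirror the structure of the proof of Theorem \ref{zt1}, replacing the discriminant $-p$ apparatus with the discriminant $-16p$ version supplied by Lemma \ref{l2}. First I would start from a reduced form $(a,b,c)$ of discriminant $-16p$ lying in the non-principal genus which represents a prime $q$ satisfying (\ref{e1}). The preliminary discussion preceding the statement records that in the non-principal genus each of $a,c$ is either $\equiv 3 \pmod 4$ or divisible by $4$, and that $\left(\frac{-a}{p}\right)=-1$ (this is forced by the assigned character $\chi$ on the non-principal genus together with $\delta$); these are exactly the hypotheses of Lemma \ref{l2}. I would also need the size bound $a < c < p$ (or at least $|D|<p$ for the relevant discriminants), which follows from reducedness since $a \le \sqrt{|{-16p}|/3}$, so that Lemma \ref{l2} applies to both $-a$ and $-c$. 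Invoking Lemma \ref{l2} then gives a unique common $\mathbb{F}_p$-root $j$ of $H_{-a}(X)$ and $H_{-c}(X)$.

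Next I would identify this $j$ with a supersingular $j$-invariant and pin down its endomorphism ring. By Ibukiyama's theorems, $\operatorname{End}(E(j))$ is isomorphic to $\mathcal{O}(q_j,r_j)$ or $\mathcal{O}'(q_j,r'_j)$, and as in Theorem \ref{zt1} we may assume $q_j=q$. The crux is to rule out the case $\operatorname{End}(E(j))\cong \mathcal{O}'(q,r')$. The remark stated just before the theorem tells us that if $\operatorname{End}(E(j))\cong\mathcal{O}(q,r)$ then there are two reduced forms $(q,\pm 4r,\frac{4r^2+4p}{q})$ of discriminant $-16p$ representing $|D|$ whenever $j$ is an $\mathbb{F}_p$-root of $H_D$; this is the analogue of the $-p$ statement used in Theorem \ref{zt1}. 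So the argument should run in the contrapositive direction: assuming $\operatorname{End}(E(j))\cong\mathcal{O}'(q,r')$, the relevant reduced form attached to $\mathcal{O}'$ has discriminant $-p$ (or $-4p$), not $-16p$, and I would use the successive-minima characterization (Proposition 5.7.3 and Theorem 5.7.6 of \cite{MR2300780}, exactly as in Theorem \ref{zt1}) to show that the form $(a,b,c)$ would then be equivalent to a form scaled from a discriminant-$-p$ form, contradicting that $(a,b,c)$ is primitive of discriminant $-16p$ in the non-principal genus. The successive minima $\sqrt{a}$ and $\sqrt{c}$ are read off from the minimality of $|D_1|=a$ and $|D_2|=c$ in the Hilbert-class-polynomial sense, as in the previous proof.

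I expect the main obstacle to be the genus bookkeeping: one must verify that the discriminant-$-16p$ form produced by the construction genuinely lands in the non-principal genus and that the case distinction $\mathcal{O}$ versus $\mathcal{O}'$ is cleanly separated by the discriminant ($-16p$ against $-p$ or $-4p$) via Corollary \ref{c1}. In Theorem \ref{zt1} the contradiction came from comparing a form of discriminant $-p$ against one attached to $\mathcal{O}(q,r)$; here the roles are reversed, so I must be careful that the successive-minima argument correctly forces the putative $\mathcal{O}'$-form to be equivalent to $(a,b,c)$ up to sign of the middle coefficient, and that this equivalence is incompatible with the two forms living over different discriminants. Once the endomorphism ring is forced to be $\mathcal{O}(q,r)$, uniqueness of $j$ is immediate from the uniqueness clause of Lemma \ref{l2}, and the final sentence—that the inverse form $(a,-b,c)$ yields the same $j$—follows trivially since $H_{-a}$ and $H_{-c}$ are unchanged and the common root $j$ is the same.
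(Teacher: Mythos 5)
Your skeleton matches the paper's: apply Lemma \ref{l2} to get the unique common $\mathbb{F}_p$-root $j$ of $H_{-a}$ and $H_{-c}$, invoke Ibukiyama to write $\operatorname{End}(E(j))$ as $\mathcal{O}(q_j,r_j)$ or $\mathcal{O}'(q_j,r'_j)$, rule out the $\mathcal{O}'$ case, and read off uniqueness and the claim about the inverse form. Two points, however, do not hold up as written. First, your assertion that $a<c<p$ ``follows from reducedness'' is false for small $a$: for $a=3$ one gets $c=(4p+1)/3>p$, so the size hypothesis $|D|<p$ underlying Lemma \ref{l2} and the surrounding remarks fails. The paper disposes of $a=3$ (giving $j=0$), $a=4$ (giving $j=1728$, where $\mathcal{O}(q,r)\cong\mathcal{O}'(q,r')$ and your dichotomy degenerates) and $p=5$ separately, and only then assumes $p>5$ and $a>4$ to get $a<c<p$; your proof needs the same carve-out.

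Second, the step you yourself flag as the crux --- ruling out $\operatorname{End}(E(j))\cong\mathcal{O}'(q,r')$ --- is handled by the paper in a genuinely different way from what you propose. Rather than a successive-minima comparison, the paper shows directly that $j$ is not a root of $H_{-p}(X)\bmod p$: using $4\nmid\gcd(a,c)$ to assume $4\nmid a$, it checks that $4pk=ap-x^2$ has no integer solutions with $k\ge 0$, so that by Theorem 1.1 of Lauter--Viray the resultant-type quantity $J(-a,-p)$ has $p$-adic valuation zero; hence $H_{-a}$ and $H_{-p}$ share no root mod $p$, the order $\mathbb{Z}[(1+\sqrt{-p})/2]$ cannot embed in $\operatorname{End}(E(j))$, and the $\mathcal{O}'$ case is impossible. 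Your alternative --- that the $\mathcal{O}'$-form of discriminant $-p$ would have to represent $|D|/4$ for $D=-a$ and $D=-c$, clashing with primitivity of $(a,b,c)$ --- can be made to work, and in fact its cleanest form needs no successive minima at all: since $4\nmid\gcd(a,c)$, the quantities $a/4$ and $c/4$ cannot both be integers, yet an integral form must represent integers. But as written, ``equivalent to a form scaled from a discriminant-$-p$ form'' is a gesture rather than an argument at precisely the hardest step; you should either carry out that integrality contradiction explicitly or adopt the paper's valuation computation.
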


\begin{proof}

If $(a,b,c)$ is a primitive reduced form in the non-principal genus with discriminant $b^2-4ac=-16p$, then $\left( \frac{-a}{p} \right)=-1$ and $a, c \equiv 0,3 \pmod 4$. By Lemma \ref{l2}, the Hilbert class polynomials $H_{-a}(X)$ and $H_{-c}(X)$ mod $p$ have one common root $j \in \mathbb{F}_p$.

If $p \equiv 3 \pmod 4$ and $a=4$, then $j=1728$. If $ p \equiv 2 \pmod 3$ and $a=3$, then $j=0$ and $\operatorname{End}(E(j)) \cong \mathcal{O}(3,1)$. If $p=5$, then there is only one supersingular $j$-invariant $0$ in $\mathbb{F}_p$. In the following, we assume $p>5$ and $a>4$, so $a<c<p$.

As in the proof of Theorem \ref{zt1}, we can show that $\text{End}(E(j)) \cong \mathcal{O}(q,r)$. Moreover, we can also prove $\mathbb{Z}[(1+\sqrt{-p})/2]$ cannot be embedded into $\mathcal{O}(q,r)$.

We claim that $j$ is not a root of the Hilbert class polynomial $H_{-p}(X)$ mod $p$. Since $4 \nmid \gcd(a,c)$, we can assume $4 \nmid a$ and $a<p$. If there exist integers $x$ and $k \ge 0$ such that $4pk= ap-x^2$, then $x^2=p(a-4k)$. It follows that $p \mid x$ and $p \mid (a-4k)$. This is a contradiction since $0<a-4k \le a <p$. Let $d_1=-a$ and $d_2=-p$. We have $v_p(F(\frac{d_1d_2-x^2}{4}))=0$ for any $|x|<\sqrt{d_1d_2}$ by Theorem 1.1 in \cite{MR3431591}. This shows that the Hilbert class polynomials $H_{-a}(X)$ and $H_{-p}(X)$ have no common roots in $\mathbb{F}_p$, and $j$ is not a root of $H_{-p}(X)$ mod $p$. It follows that the order $\mathbb{Z}[(1+\sqrt{-p})/{2}]$ cannot be embedded into the endomorphism ring of $E(j)$, so we can assume $\operatorname{End}(E(j)) \cong \mathcal{O}(q_j,r_j)$ for some $q_j$ satisfying (\ref{e1}).

We have $\text{End}(E(j)) \cong \mathcal{O}(q,r)$. Obviously, the inverse form $(a,-b,c)$ corresponds to the same $j$-invariant.

\end{proof}

\begin{remark}
Given a maximal order $\mathcal{O}$ in $B_{p,\infty}$, one can construct a supersingular elliptic curve $E$ such that $\operatorname{End}(E) \cong \mathcal{O}$ by taking gcds of the reductions
modulo $p$ of three Hilbert class polynomials in \cite{MR3240797}. For the maximal order $\mathcal{O}(q,r)$ (resp. $\mathcal{O}'(q,r')$), one can construct a supersingular elliptic curve $E$ over $\mathbb{F}_p$ by taking gcds of the reductions modulo $p$ of two Hilbert class polynomials.
\end{remark}

If $p \equiv 3 \pmod 4$, then the reduced form $(1,1,(p+1)/4)$ is the unique one with order $\le 2$ in the class group $C(-p)$. Moreover, the reduced form $(4,0,p)$ is the unique one in the non-principal genus with order $\le 2$ in the class group $C(-16p)$. In fact, the forms $(1,1,(p+1)/4)$ and $(4,0,p)$ correspond to the same supersingular $j$-invariant $1728$. If $p\equiv 1 \pmod 4$, then there exists no form in the non-principal genus with order $\le 2$ in the class group $C(-16p)$.

For a prime $p > 5$, let $S_p$ be the set of all supersingular $j$-invariants in $\mathbb{F}_p$. If $p \equiv 1 \pmod 4$, the number of reduced forms with discriminant $-16p$ in the non-principal genus is $h(-16p)/2=h(-4p)$ and $\# S_p=h(-4p)/2$. If $p \equiv 3 \pmod 4$, $\#S_p=(h(-4p)+1)/2+(h(-p)+1)/2-1=h(-4p)/2+h(-p)/2$. So we have the following formula:

$$ \# S_{p}= \left\{
\begin{array}{lcl}
h(-4p)/2 & &{\text{if} \ p\equiv 1 \pmod{4} ,} \\
h(-p) & &{\text{if} \ p\equiv 7 \pmod{8} ,}\\
2h(-p) & &{\text{if} \ p\equiv 3 \pmod{8} ,}
\end{array}
\right.
$$
where $h(D)$ is the class number of the imaginary quadratic order with discriminant $D$. This formula coincides with the result in \cite{MR3451433}.

In fact, Theorems \ref{zt1} and \ref{zt2} establish a correspondence between reduced quadratic forms and supersingular $j$-invariants in $\mathbb{F}_p$. Notice that this correspondence is not $1$-to-$1$. In the following of this section, we consider twist curves and isomorphism classes of supersingular elliptic curves over $\mathbb{F}_p$.

Let $p>3$ be a prime. Choose a non-square element $\mu \in \mathbb{F}_p$. For an elliptic curve $E:Y^2=X^3+aX+b$, we define its twist curve $\tilde{E}$ by the following way:
\begin{equation*}
\begin{array}{llll}
 (1) & \tilde{E}:Y^2=X^3+a\mu^2X+b\mu^3 & \text{if} \ & j(E) \neq 1728; \\
 (2) & \tilde{E}:Y^2=X^3+a\mu X & \text{if} \ & j(E) = 1728.
\end{array}
\end{equation*}

This definition is a simple case in Proposition 5.4 of \cite[X.5]{MR2514094}. The twist curves have the same $j$-invariant, while they are not isomorphic over $\mathbb{F}_p$.

Let $\text{Iso}(\mathbb{F}_p,\frac{1+\pi}{2})$ (resp. $\text{Iso}(\mathbb{F}_p,\pi)$) denote the set of $\mathbb{F}_p$-isomorphism classes of supersingular elliptic curves over $\mathbb{F}_p$ with endomorphism ring $\mathcal{O}'(q,r')$ (resp. $\mathcal{O}(q,r)$). We have the following theorem.

\begin{theorem}
Let $p>3$ be a prime. Every class in $\operatorname{Iso}(\mathbb{F}_p,\frac{1+\pi}{2})$ is $1$-to-$1$ corresponding to a reduced quadratic form with discriminant $-p$. Every class in $\operatorname{Iso}(\mathbb{F}_p,\pi)$ is $1$-to-$1$ corresponding to a reduced form in the non-principal genus with discriminant $-16p$.
\end{theorem}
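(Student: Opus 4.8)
The plan is to prove both correspondences level by level, by matching the fibers of two natural "forgetful" maps to the set of supersingular $j$-invariants: on the curve side, forgetting the choice of twist, and on the form side, forgetting the sign of $b$ (i.e.\ identifying a form with its inverse). I work separately on the surface (endomorphism ring $\mathcal{O}'(q,r')$, $\frac{1+\pi}{2}\in\operatorname{End}_{\mathbb{F}_p}$, discriminant $-p$) and the floor (endomorphism ring $\mathcal{O}(q,r)$, discriminant $-16p$). First I dispose of $p\equiv 1\pmod 4$: here $-p\equiv 3\pmod 4$ is not a discriminant, so there are no forms of discriminant $-p$, and every supersingular $E/\mathbb{F}_p$ has $\operatorname{End}_{\mathbb{F}_p}(E)=\mathbb{Z}[\sqrt{-p}]=O_K$ with $\frac{1+\pi}{2}\notin\operatorname{End}_{\mathbb{F}_p}(E)$, so $\operatorname{Iso}(\mathbb{F}_p,\frac{1+\pi}{2})=\varnothing$ and the first statement is vacuous. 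For the second, $\#\operatorname{Iso}(\mathbb{F}_p,\pi)=2\#S_p=h(-4p)=h(-16p)/2$ equals the number of non-principal reduced forms, and the fiber argument below supplies the bijection.

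Next I would record the twist structure for $p\equiv 3\pmod 4$. For a supersingular $j\in\mathbb{F}_p$ with $j\neq 1728$, the curve $E(j)$ and its quadratic twist are the only two $\mathbb{F}_p$-isomorphism classes with that invariant, and they lie on the \emph{same} level: the quadratic twist is defined over $\mathbb{F}_{p^2}$ and carries $\pi\mapsto-\pi$, so $\operatorname{End}_{\mathbb{F}_p}(\widetilde{E})\cong\operatorname{End}_{\mathbb{F}_p}(E)$, and since $\frac{1-\pi}{2}=1-\frac{1+\pi}{2}$ is integral exactly when $\frac{1+\pi}{2}$ is, both curves are simultaneously on the surface or on the floor. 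Consequently the map sending an $\mathbb{F}_p$-isomorphism class on a fixed level to its $j$-invariant is exactly $2$-to-$1$ over every supersingular $j\neq 1728$ on that level.

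The decisive fiber is $j=1728$. Because $\gcd(4,p-1)=2$, the quadratic twist $a\mapsto a\mu^2$ of $Y^2=X^3+aX$ is already an $\mathbb{F}_p$-isomorphism, so the two classes with $j=1728$ are related instead by the quartic twist $a\mapsto a\mu$ of the paper's definition, and this twist exchanges surface and floor; hence $j=1728$ meets each level exactly once. The route I would actually take makes this split forced by Theorems \ref{zt1} and \ref{zt2}: the ambiguous (principal) form $(1,1,(p+1)/4)$ of discriminant $-p$ produces a surface curve with $j=1728$, while the unique ambiguous non-principal form $(4,0,p)$ of discriminant $-16p$ produces a floor curve with $j=1728$, so $1728$ occurs on both levels, and since it has only two classes in all it occurs once on each. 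With this in hand I assemble the bijection. By Theorems \ref{zt1}/\ref{zt2} (whose statements already give the output $j$ uniquely and note that a form and its inverse share it), and by Corollary \ref{c1} (so the image depends only on the form class), the reduced forms of discriminant $-p$ (resp.\ non-principal forms of discriminant $-16p$) surject onto the surface (resp.\ floor) $j$-invariants, $2$-to-$1$ except $1$-to-$1$ over $j=1728$, where the unique ambiguous form sits; here one uses that $C(-p)$ has odd order and that the non-principal genus of $C(-16p)$ contains exactly one ambiguous class, so these are the only ambiguous forms. Thus on each level the two surjections to $\{j\}$ have identical fibers (size $2$ away from $1728$, size $1$ at $1728$), and choosing over each $j\neq 1728$ a pairing of the two inverse forms with the two twist classes, and matching the ambiguous form with the single class over $1728$, yields the correspondences. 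The totals $\#\operatorname{Iso}(\text{surface})=h(-p)=\#\{\text{disc }-p\text{ forms}\}$ and $\#\operatorname{Iso}(\text{floor})=h(-16p)/2=\#\{\text{non-principal forms}\}$, read off from the isogeny-graph proposition and the class-number counts already recorded, confirm that nothing is over- or under-counted.

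The main obstacle is the behavior at $j=1728$: establishing that its two $\mathbb{F}_p$-isomorphism classes split one-surface/one-floor rather than both landing on one level. This is precisely where quadratic twisting fails to preserve the level (the quadratic twist being trivial there and replaced by a quartic twist), and it is what makes the single ambiguous form on each side correspond to the single class available on each level. A secondary technical point to settle is that every reduced form in the relevant family actually represents some prime $q$ satisfying (\ref{e1}), so that Theorems \ref{zt1}/\ref{zt2} apply to it; this follows from Chebotarev/Dirichlet, since the congruence conditions in (\ref{e1}) are compatible with representability by a prescribed form class.
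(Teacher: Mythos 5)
Your proposal is correct and follows essentially the same route as the paper: pair each curve and its quadratic twist with a reduced form and its inverse via Theorems \ref{zt1} and \ref{zt2}, and treat $j=1728$ separately as the unique ambiguous class on each level. You fill in several details the paper leaves implicit (level-preservation under quadratic twisting, the forced surface/floor split of the two classes with $j=1728$, the matching class-number counts, and the Chebotarev argument that every relevant form class represents a prime satisfying (\ref{e1})), which only strengthens the argument.
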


\begin{proof}
For $j(E) \neq 1728$, the $\mathbb{F}_p$-isomorphism class containing $E$ corresponds to the form $(a,b,c)$, and the $\mathbb{F}_p$-isomorphism class containing $\tilde{E}$ corresponds to $(a,-b,c)$.

As we know, the root of the Hilbert class polynomial $H_{-4}(X)=X-1728$ is a supersingular $j$-invariant if and only if $p \equiv 3 \pmod 4$. In this case, we use $E(1728):Y^2=X^3-X$ to represent an isomorphism class in $\mathbb{F}_p$. Its twist curve can be written as $\tilde{E}(1728):Y^2=X^3+4X$. Furthermore, the curve $E(1728)$ is in the surface of the $\mathbb{F}_p$ isogeny graph, and the curve $E(1728)$ corresponds to the form $(1,1,(p+1)/4)$. The curve $\tilde{E}(1728)$ is on the floor, and it corresponds to the form $(4,0,p)$.

\end{proof}

\begin{example}\label{ex1}
If $p=83$, then there are 12 $\mathbb{F}_{83}$-isomorphism classes of supersingular elliptic curves. The curves and the corresponding forms are as following.

\small
$$\begin{array}{llll}
  E_0:y^2=x^3-x & f_0=(1,1,21) &\quad \tilde{E}_0:y^2=x^3+x &g_0=(4,0,83) \\
  E_1:y^{2}=x^{3}+13 x^{2}-x & f_1=(3,1,7) &\quad \tilde{E}_1:y^{2}=x^{3}-13 x^{2}-x &f_1^{-1}=(3,-1,7) \\
  E_2:y^{2}=x^{3}+11 x^{2}+x & g_2=(11,-6,31) & \quad \tilde{E}_2:y^{2}=x^{3}-11 x^{2}+x & g_2^{-1}=(11,6,31) \\
  E_3:y^{2}=x^{3}+12 x^{2}+x & g_3=(7,4,48) & \quad \tilde{E}_3:y^{2}=x^{3}-12 x^{2}+x & g_3^{-1}=(7,-4,48)\\
  E_4:y^{2}=x^{3}+ \ 6 x^{2}+x & g_4=(16,-12,23) & \quad \tilde{E}_4:y^{2}=x^{3}-\ 6 x^{2}+x & g_4^{-1}=(16,12,23) \\
  E_5:y^{2}=x^{3}-13 x^{2}+x & g_5=(3,-2,111) & \quad \tilde{E}_5:y^{2}=x^{3}+13 x^{2}+x & g_5^{-1}=(3,2,111)
\end{array}$$

\normalsize
The supersingular elliptic curves are from Remark 5 in \cite{MR3897883}. Note that the forms $f_i$ and $f_i^{-1}$ are of discriminant $-83$, and the forms $g_i$ and $g_i^{-1}$ are of discriminant $-83\times 16$. In fact, the corresponding form is not unique except for $E_0$ and $\tilde{E}_0$. We just choose one correspondence in this example.
\end{example}

We have established a correspondence between the set of $\mathbb{F}_p$-isomorphism classes of supersingular elliptic curves over $\mathbb{F}_p$ and the set of reduced quadratic forms. In fact, this correspondence is not canonical. Moreover, the reduced forms with discriminant $-p$ form a group by composition. This suggests us to research isogenies between supersingular elliptic curves over $\mathbb{F}_p$.

\section{Isogenies and binary quadratic forms}

As we have seen, the correspondence which we have established in Section 3 is only about sets. Now, let us consider $\mathbb{F}_p$-isogenies between supersingular elliptic curves defined over $\mathbb{F}_p$. Note that these isogenies correspond to ideals in imaginary quadratic orders, hence they could be represented by quadratic forms.

In this section, we present the connection between quadratic forms corresponding to isogenies over $\mathbb{F}_p$ and quadratic forms corresponding to supersingular elliptic curves defined over $\mathbb{F}_p$.

To study the connections between $\mathcal{O}(q,r)$ and $\mathcal{O}'(q,r')$, we recall the connections between quadratic forms with different discriminants.

\subsection{Quadratic forms with different discriminants}
We now review some basic facts about quadratic forms with different discriminants. If
$$
S=\left(\begin{array}{ll}
\alpha & \beta \\
\gamma & \delta
\end{array}\right)
$$
is a $2 \times 2$ matrix with integer coefficients and determinant $\alpha\delta-\beta\gamma=s>0$, then the change of variables takes a form $f=(a, b, c)$ of discriminant $D$ to a form
$$
f'=\left(a', b', c' \right)=\left(a \alpha^{2}+b \alpha \gamma+c \gamma^{2}, b(\alpha \delta+\beta \gamma)+2(a \alpha \beta+c \gamma \delta), a \beta^{2}+b \beta \delta+c \delta^{2}\right)
$$
of discriminant $D s^{2}$. In matrix notation this is
$$
\left(\begin{array}{cc}
a^{\prime} & b^{\prime} / 2 \\
b^{\prime} / 2 & c'
\end{array}\right)=\left(\begin{array}{ll}
\alpha & \gamma \\
\beta & \delta
\end{array}\right)\left(\begin{array}{cc}
a & b / 2 \\
b / 2 & c
\end{array}\right)\left(\begin{array}{ll}
\alpha & \beta \\
\gamma & \delta
\end{array}\right),
$$
which we write as $f'=S^\mathrm{T} f S$ for brevity. We shall call such a matrix $S$ a transformation of determinant $s$ and shall say that $f'$ is derived from $f$ by the transformation $S$ of determinant $s$.

Any primitive form of discriminant $Ds^2$ can be derived from a primitive form of discriminant $D$ by the application of a transformation $S$ of determinant $s$. We define two transformations $S_1$ and $S_2$ of discriminant $s$ to be $\operatorname{SL}_2(\mathbb{Z})$-right-equivalent if there exists a matrix $M \in \operatorname{SL}_2(\mathbb{Z})$ such that $S_1M=S_2$. Moreover, we have the following proposition.
\begin{proposition}(\cite[Proposition 7.2]{MR1012948})
Let $s$ be a prime integer. The $\operatorname{SL}_2(\mathbb{Z})$-right-equivalent transformations have as equivalence class representatives the $s+1$ transformations
\begin{center}
$S_s=\left(\begin{array}{ll}
1 & 0 \\
0 & s
\end{array}\right)$
\ and  \
$S_i=\left(\begin{array}{ll}
s & i \\
0 & 1
\end{array}\right),$
\end{center}
for $0 \le i \le s-1$.
\end{proposition}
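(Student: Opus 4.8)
The plan is to classify the integer matrices $S$ of determinant $s$ by the sublattice of $\mathbb{Z}^2$ that their columns span, and to read off the representatives from that classification. Write $S=(v_1\ v_2)$, where $v_1,v_2\in\mathbb{Z}^2$ are the columns. Right multiplication $S\mapsto SM$ by $M\in\operatorname{SL}_2(\mathbb{Z})$ replaces the ordered basis $(v_1,v_2)$ by another ordered basis of the \emph{same} lattice $L_S:=\mathbb{Z}v_1+\mathbb{Z}v_2$, and it preserves the positive determinant $s$; hence $L_S$ is an invariant of the right-equivalence class, and it has index $s$ in $\mathbb{Z}^2$. First I would establish the converse: if $L_{S_1}=L_{S_2}$, then the two column bases of this common lattice differ by some $N\in\operatorname{GL}_2(\mathbb{Z})$ with $S_2=S_1N$; taking determinants gives $s=s\det N$, so $\det N=1$, $N\in\operatorname{SL}_2(\mathbb{Z})$, and $S_1$ is right-equivalent to $S_2$. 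This yields a bijection between $\operatorname{SL}_2(\mathbb{Z})$-right-equivalence classes of determinant-$s$ matrices and sublattices of $\mathbb{Z}^2$ of index $s$.

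Next I would count these sublattices. Since $s$ is prime, any index-$s$ sublattice $L$ satisfies $\mathbb{Z}^2/L\cong\mathbb{Z}/s\mathbb{Z}$ (a group of order $s$ is cyclic), so $L=\ker\varphi$ for a surjection $\varphi:\mathbb{Z}^2\to\mathbb{Z}/s\mathbb{Z}$. Such surjections are the maps $(x,y)\mapsto ux+vy$ with $(u,v)\in(\mathbb{Z}/s\mathbb{Z})^2\setminus\{0\}$, and two of them have the same kernel exactly when they differ by a unit of $\mathbb{Z}/s\mathbb{Z}$; thus the number of kernels is $(s^2-1)/(s-1)=s+1=\#\mathbb{P}^1(\mathbb{F}_s)$. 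Hence there are exactly $s+1$ right-equivalence classes.

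It then remains to exhibit the stated matrices as a transversal. I would compute the column lattices of the candidates: $S_s=\diag(1,s)$ spans $\{(x,y):y\equiv0\bmod s\}$, while $S_i=\left(\begin{smallmatrix}s&i\\0&1\end{smallmatrix}\right)$ spans $L_i=\{(x,y):x\equiv iy\bmod s\}$ for $0\le i\le s-1$. A direct check shows these $s+1$ sublattices are pairwise distinct — they are precisely the $s+1$ distinct lines of $\mathbb{P}^1(\mathbb{F}_s)$ — so by the count of the previous paragraph they exhaust all index-$s$ sublattices, and the bijection then shows that the listed matrices represent each of the $s+1$ classes exactly once. Alternatively, existence can be seen concretely by reducing an arbitrary $S$ with elementary $\operatorname{SL}_2(\mathbb{Z})$ column operations: run the Euclidean algorithm on the bottom row to clear its left entry, apply $-I\in\operatorname{SL}_2(\mathbb{Z})$ to make the diagonal positive, and reduce the top-right entry modulo the top-left one, reaching $\left(\begin{smallmatrix}a&b\\0&d\end{smallmatrix}\right)$ with $a,d>0$, $ad=s$, $0\le b<a$; primality forces $(a,d)=(1,s)$ with $b=0$, i.e.\ $S_s$, or $(a,d)=(s,1)$ with $0\le b<s$, i.e.\ $S_b$.

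The routine parts are these two reductions; the step needing genuine care is uniqueness, namely ruling out hidden coincidences among the $s+1$ candidates. The clean device for this is the complete invariant $L_S$: once the bijection with index-$s$ sublattices is in hand, inequivalence of the candidates reduces to the elementary fact that their column lattices realize the $s+1$ distinct lines of $\mathbb{P}^1(\mathbb{F}_s)$. The only place where orientation matters — the distinction between $\operatorname{SL}_2(\mathbb{Z})$ and $\operatorname{GL}_2(\mathbb{Z})$ — is controlled entirely by the determinant bookkeeping in the first paragraph.
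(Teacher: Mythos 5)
Your argument is correct and complete. Note that the paper itself offers no proof of this proposition: it is stated with a bare citation to Buell's book, so there is nothing in-paper to compare against line by line. The standard reference proof (and the one your ``alternative'' paragraph reproduces) is the Hermite-type column reduction: use the Euclidean algorithm on the bottom row to reach an upper-triangular matrix with positive diagonal and top-right entry reduced modulo the top-left one, and then let primality of $s$ force the two shapes $\bigl(\begin{smallmatrix}1&0\\0&s\end{smallmatrix}\bigr)$ and $\bigl(\begin{smallmatrix}s&i\\0&1\end{smallmatrix}\bigr)$. What your main route adds is a clean proof of \emph{uniqueness}, which the reduction argument alone leaves implicit: the column lattice $L_S$ is a complete invariant of the $\operatorname{SL}_2(\mathbb{Z})$-right-equivalence class (the determinant bookkeeping correctly rules out the $\operatorname{GL}_2(\mathbb{Z})\setminus\operatorname{SL}_2(\mathbb{Z})$ ambiguity), index-$s$ sublattices of $\mathbb{Z}^2$ for $s$ prime are counted by $\mathbb{P}^1(\mathbb{F}_s)$, and the $s+1$ listed matrices visibly realize the $s+1$ distinct lines. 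Both halves are sound, and together they give existence and pairwise inequivalence; this is a perfectly acceptable, arguably more conceptual, substitute for the cited proof.
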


\begin{remark}
Let $q$ be a prime with $q^2 \nmid Ds^2$. If $q$ can be represented by $f$, then it must be represented by one of $S^\mathrm{T}_i f S_i$ with $0 \le i \le s$. On the contrary, if a prime $q$ can be represented by one of $S^\mathrm{T}_i f S_i$, then $q$ must be represented by $f$.
\end{remark}

\begin{proposition}
Let $p\equiv 3 \pmod 4$ be a prime. If $q_1 \neq q_2$ are primes satisfying $(\ref{e1})$, then $\mathcal{O}(q_1,r_1) \cong \mathcal{O}(q_2,r_2)$ if and only if $q_1$ and $q_2$ can be represented by the same primitive reduced quadratic form with discriminant $-16p$.
\end{proposition}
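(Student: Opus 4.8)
The plan is to deduce the statement from Corollary \ref{c1}(2), which already asserts that $\mathcal{O}(q_1,r_1)\cong\mathcal{O}(q_2,r_2)$ if and only if $q_1$ and $q_2$ are represented by the same reduced form of discriminant $-4p$. Thus the whole task reduces to a purely form-theoretic equivalence: for two primes $q_1,q_2$ satisfying (\ref{e1}), they are represented by a common reduced form of discriminant $-4p$ if and only if they are represented by a common primitive reduced form of discriminant $-16p$. I would prove this by relating the two discriminants through the determinant-$2$ transformations of Section 4.1, since $-16p=(-4p)\cdot 2^2$.

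Explicitly, starting from a primitive form $f=(a,b,c)$ of discriminant $-4p$ (so $b$ is even and $ac-(b/2)^2=p$), the three determinant-$2$ derived forms coming from $S_0,S_1,S_s$ are $(4a,2b,c)$, $(4a,2b+4a,a+b+c)$ and $(a,2b,4c)$, all of discriminant $-16p$. The Remark after the transformation proposition gives, for any prime $q$ with $q^2\nmid 16p$ (which holds here, as $q\neq 2,p$), that $q$ is represented by $f$ if and only if $q$ is represented by one of these three forms. The key step is to determine which of the three derived forms are primitive and in which genus of discriminant $-16p$ they lie. Using $p\equiv 3\pmod 4$ together with $ac-(b/2)^2=p$, a short case analysis on $b\bmod 4$ and the parities of $a,c$ shows that exactly two of the three derived forms are primitive, and that the odd values representing them differ by $2$ modulo $4$; hence the genus character $\delta(a)=(-1)^{(a-1)/2}$ takes opposite values on them, so exactly one of the two lies in the non-principal genus. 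Call this unique primitive, non-principal derived form $F(f)$.

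With $F(f)$ in hand both implications are short. A prime $q$ satisfying (\ref{e1}) has $\delta(q)=-1$ and $(q/p)=1$, so it lies in the (unique) non-principal genus and, being prime and coprime to the discriminant, is represented only by primitive forms; hence whenever $q$ is represented by $f$, the derived form representing it is primitive and has $\delta=-1$, so it must be $F(f)$. Consequently, if $q_1$ and $q_2$ are represented by a common reduced form $f$ of discriminant $-4p$, they are both represented by $F(f)$, and its reduced form is a common primitive reduced form of discriminant $-16p$. Conversely, every primitive form $G$ of discriminant $-16p$ is derived from some primitive form $f$ of discriminant $-4p$, and the converse half of the Remark shows that $q$ represented by $G$ forces $q$ represented by $f$; so a common primitive reduced form of discriminant $-16p$ yields a common reduced form of discriminant $-4p$. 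Since for a fixed prime the reduced forms representing it constitute a single inverse-pair, sharing one common reduced form is the same as sharing the whole set, so the two representability conditions coincide, completing the equivalence and hence the proposition.

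I expect the main obstacle to be the genus bookkeeping in the middle step, namely verifying that exactly one primitive derived form lands in the non-principal genus. This rests on the arithmetic constraint $ac-(b/2)^2=p\equiv 3\pmod 4$: in the case $4\mid b$ it forces $a,c$ odd and incongruent modulo $4$, while in the case $b\equiv 2\pmod 4$ it forces the even one of $a,c$ to be divisible by $4$, and these congruences are precisely what make the $\delta$-values of the two primitive derived forms opposite. The degenerate cases $j=0$ and $j=1728$, corresponding to the ambiguous forms $(1,1,(p+1)/4)$ and $(4,0,p)$, can be checked directly and do not affect the argument.
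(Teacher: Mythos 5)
Your proof is correct and follows essentially the same route as the paper's: both reduce to Corollary~\ref{c1}(2) for discriminant $-4p$ and then pass to the three determinant-$2$ derived forms of discriminant $-16p$. Your primitivity and genus-character analysis is a valid (and more detailed) justification of the step the paper dismisses as ``easy to show,'' namely that exactly one of the three derived forms can represent $q_1$ or $q_2$.
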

\begin{proof}
For $p\equiv 3 \pmod 4$, $\mathcal{O}(q_1,r_1) \cong \mathcal{O}(q_2,r_2)$ if and only if $q_1$ and $q_2$ can be represented by the same primitive reduced quadratic form with discriminant $-4p$ by Corollary \ref{c1}.

Assume that $q_1$ and $q_2$ can be represented by a primitive form $(a,b,c)$ with discriminant $-4p$. For $s=2$, the following three forms with discriminant $-16p$ are derived from $(a,b,c)$:
$$(a,2b,4c) \quad (4a,2b,c) \quad (4a,4a+2b,a+b+c).$$
For $p \equiv 3 \pmod 4$ and $2 \mid b$, we have $ac=\frac{b^2+4p}{4}=\left( \frac{b}{2} \right)^2+p$. It is easy to show that only one of the three forms above can represent $q_1$ or $q_2$.

We show that $q_1$ and $q_2$ can be represented by the same reduced quadratic form with discriminant $-16p$. On the contrary, if $q_1$ and $q_2$ can be represented by the same reduced quadratic form with discriminant $-16p$, then $\mathcal{O}(q_1,r_1) \cong \mathcal{O}(q_2,r_2)$ obviously.

\end{proof}

In the following of this subsection, we discuss reduced primitive quadratic forms with discriminant $-16p$ derived from forms with discriminant $-p$.

\begin{proposition}\label{p2}
If $p \equiv 3 \pmod 8$, then every reduced form with discriminant $-p$ divides into six reduced forms with discriminant $-16p$ and three of them are in the non-principal genus. If $p\equiv 7 \pmod 8$, then every reduced form with discriminant $-p$ divides into two reduced forms with discriminant $-16p$ and one of them is in the non-principal genus.
\end{proposition}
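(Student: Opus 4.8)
The plan is to read the statement through the natural surjection between ideal class groups and to extract both counts---six versus two, and how many lie in the non-principal genus---from the class number formula together with genus theory. Since both $p\equiv 3\pmod 8$ and $p\equiv 7\pmod 8$ satisfy $p\equiv 3\pmod 4$, the discriminant $-p$ is fundamental, $O_K$ is the order of discriminant $-p$, and the order $O'$ of discriminant $-16p$ is exactly the suborder of conductor $4$. By Proposition \ref{t3}, reduced forms of discriminant $-p$ (resp. $-16p$) correspond bijectively to $C(-p)=C(O_K)$ (resp. $C(-16p)=C(O')$). Saying that a form $f$ of discriminant $-p$ divides into a form $g$ of discriminant $-16p$ means that $g$ is derived from $f$ by a transformation of determinant $4$; on the level of ideals this descent is $\mathfrak{a}\mapsto \mathfrak{a}O_K$, so the forms into which $f$ divides are precisely the fibre over $[f]$ of the natural surjection $\pi\colon C(-16p)\to C(-p)$, $[\mathfrak{a}]\mapsto[\mathfrak{a}O_K]$ (see \cite{MR3236783}). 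As $\pi$ is a group homomorphism, every fibre is a coset of $\ker\pi$ and hence has exactly $|\ker\pi|=h(-16p)/h(-p)$ elements, so it suffices to compute this index and to analyse the genus of the elements of one fibre.

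First I would compute the ratio. Applying the class number formula \cite[Corollary 7.28]{MR3236783} with $D=-p$ and $m=4$, the only prime dividing $m$ is $2$, and since $p>3$ one has $[O_K^{\times}:O'^{\times}]=1$, so
\[
h(-16p)=4\,h(-p)\left(1-\left(\tfrac{-p}{2}\right)\tfrac{1}{2}\right).
\]
Because $\left(\tfrac{-p}{2}\right)=-1$ when $p\equiv 3\pmod 8$ and $\left(\tfrac{-p}{2}\right)=+1$ when $p\equiv 7\pmod 8$, this gives $h(-16p)=6\,h(-p)$ in the first case and $h(-16p)=2\,h(-p)$ in the second. Hence $|\ker\pi|=6$ resp. $2$, which is exactly the assertion that every reduced form of discriminant $-p$ divides into six resp. two reduced forms of discriminant $-16p$.

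It remains to count how many of these lie in the non-principal genus. The primitive forms of discriminant $-16p$ fall into two genera of equal size by \cite[Theorem 3.15]{MR3236783}, the principal genus being the kernel of the surjection $\gamma$ of $C(-16p)$ onto the two-element group indexing the genera (equivalently the subgroup of squares $C(-16p)^2$). A fibre $g\cdot\ker\pi$ splits evenly between the two genera if and only if $\gamma$ is non-trivial on $\ker\pi$, in which case exactly half of its $|\ker\pi|$ elements are non-principal, giving $3$ resp. $1$. To see that $\gamma|_{\ker\pi}$ is non-trivial I would exhibit two ambiguous classes of $\ker\pi$ in different genera. The principal form $(1,0,4p)$ lies in $\ker\pi$ and in the principal genus. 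For $(4,0,p)$ the associated ideal is $[4,2\sqrt{-p}]=2[2,\sqrt{-p}]$, and $[2,\sqrt{-p}]O_K=O_K$ since $1=2\cdot\frac{1+\sqrt{-p}}{2}-\sqrt{-p}\in(2,\sqrt{-p})O_K$; thus $(4,0,p)O_K=2O_K$ is principal and $[(4,0,p)]\in\ker\pi$. Evaluating the assigned characters on the represented value $p=4\cdot 0^2+p\cdot 1^2$ gives $\delta(p)=(-1)^{(p-1)/2}=-1$, so $(4,0,p)$ lies in the non-principal genus. Hence $\ker\pi$ meets both genera, every fibre splits evenly, and exactly $3$ (resp. $1$) of the derived forms are non-principal.

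The main obstacle is the bookkeeping compressed into the first paragraph: one must justify carefully that the relation ``divides into'' agrees with the fibre of $\pi$, i.e. that the descent $g\mapsto[gO_K]$ is well defined on classes and independent of the chosen determinant-$4$ transformation, and that the distinct reduced primitive forms produced from a fixed reduced $f$ --- rather than the raw number of determinant-$4$ transformations, which is $\sigma_1(4)=7$, several of which yield imprimitive or mutually equivalent forms --- are exactly the $|\ker\pi|$ classes in the fibre. Once this identification is secured, the two numerical outputs follow formally from the index computation of the second paragraph and the single explicit genus computation of the third.
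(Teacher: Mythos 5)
Your proof is correct in substance but takes a genuinely different, more structural route than the paper. The paper argues by explicit enumeration: it first lists the three determinant-$2$ transforms of $(a,b,c)$ (of discriminant $-4p$), checks primitivity via the parity of $ac$, then lists the nine determinant-$4$ transforms, discards the imprimitive ones, and deduces pairwise inequivalence of the survivors from the stepwise class-number ratios $h(-4p)=3h(-p)$ and $h(-16p)=2h(-4p)$; the genus count is then read off column by column. You instead package everything into the descent homomorphism $\pi\colon C(-16p)\to C(-p)$, get the fibre size $|\ker\pi|=h(-16p)/h(-p)\in\{6,2\}$ in one application of Cox's Corollary 7.28, and obtain the even split between genera from the non-triviality of the genus character on $\ker\pi$, witnessed by the ambiguous classes $(1,0,4p)$ and $(4,0,p)$. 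Your version is cleaner and explains \emph{why} exactly half of each fibre is non-principal; the paper's version has the advantage of exhibiting the derived forms explicitly, which it reuses later (e.g.\ in the discussion following Theorem \ref{zt6}). Your flagged obstacle --- that ``divides into'' means ``lies in the fibre of $\pi$'' --- is real but standard (it is essentially Cox's Proposition 7.20 together with the fact that every primitive form of discriminant $Ds^2$ is derived from a primitive form of discriminant $D$ whose class is uniquely determined), so I would not count it as a gap.

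One small repair: you evaluate the assigned character $\delta$ at the represented value $p$, but $p$ divides the discriminant $-16p$, and assigned characters are only defined on represented values coprime to the discriminant. Use instead $4+p=4\cdot 1^2+p\cdot 1^2$, which is odd and prime to $p$; then $\delta(4+p)=(-1)^{(p+3)/2}=-1$ for $p\equiv 3\pmod 4$, which gives the same conclusion that $(4,0,p)$ lies in the non-principal genus.
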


\begin{proof}
Let $(a,b,c)$ be a reduced form with discriminant $-p$. The following three forms with discriminant $-4p$ are derived from $(a,b,c)$.

$$(a,2b,4c) \quad (4a,2b,c) \quad (4a,4a+2b,a+b+c)$$

(1) If $p \equiv 3 \pmod 8$, then $ac$ is odd. It follows that the above three forms are primitive. We have $h(-4p)=3h(-p)$, so these three forms are not equivalent in $C(-4p)$. The following nine forms with discriminant $-16p$ are derived from $(a,b,c)$.
$$\begin{array}{lll}
  (4a,4b,4c)         & (4a,4b,4c)           & (4a,8a+4b,4a+4b+4c) \\
  (a,4b,16c)         & (16a,4b,c)           & (16a,8a+4b,a+b+c) \\
  (4a,4a+4b,a+2b+4c) & (16a,16a+4b,4a+2b+c) & (16a,24a+4b,9a+3b+c)
\end{array}$$

It is obvious that the three forms in the first row are not primitive, and other six forms are not equivalent in $C(-16p)$ since $h(-16p)=2h(-4p)$. Moreover, only one form in every column is in the non-principal genus.

(2) If $p \equiv 7 \pmod 8$, then $ac$ is even. Only one of the three forms is primitive. Without loss of generality, we can assume the form $(a,2b,4c)$ is primitive. In this case, one of $(a,4b,16c)$ and $(4a,4a+4b,a+2b+4c)$ is in the non-principal genus.

\end{proof}

\subsection{Isogenies and quadratic forms}
Denote the imaginary quadratic field $K=\mathbb{Q}(\sqrt{-p})$. Let $O$ be an order in $K$ and $O_K$ be the integer ring of $K$.

We always assume that $E_1$ and $E_2$ are supersingular elliptic curves defined over $\mathbb{F}_p$ and  the isogeny $\phi : E_1 \to E_2$ is also defined over $\mathbb{F}_p$ in this following. Without loss of generality, we can restrict to the case when $\text{deg}(\phi)=\ell$ is a prime, since every isogeny can be written as composition of isogenies of prime degree. We divide the remainder of this subsection into three cases according to the endomorphism rings of $E_1$ and $E_2$.
\\

\textbf{Case 1:} $\operatorname{End}_{\mathbb{F}_p}( E_1) \cong \operatorname{End}_{\mathbb{F}_p}( E_2) \cong \mathbb{Z}[(1+\sqrt{-p})/{2}]$.
\\

In this case, we must have $p \equiv 3 \pmod 4$. We write $\mathcal{O}'(q,r)$ with $r^2 +p \equiv 0 \pmod {4q}$. If $\operatorname{End} (E_1) \cong  \mathcal{O}'(q_1,r_1)$, without loss of generality, we can assume that $E_1$ corresponds to the form $(q_1,r_1, \frac{r_1^{2}+p}{4q_1})$. Since $\ell$ splits in $O_K$, there exists an integer $-\ell<b\le \ell$ satisfying $b^2 \equiv -p \pmod {4\ell}$. We also assume $\ell O_K=\mathfrak{l}\bar{\mathfrak{l}}$, where $\mathfrak{l}=[\ell, (-b+\sqrt{-p})/2]$ (see \cite[Theorem 6.15]{MR1012948}).

\begin{theorem}\label{zt4}
For a prime $p \equiv 3 \pmod 4$, let $E_1$ be a supersingular elliptic curve with $\operatorname{End} (E_1) \cong  \mathcal{O}'(q_1,r_1)$. If $\phi : E_1 \to E_2$ is an isogeny defined over $\mathbb{F}_p$ of degree $\ell$ with $\operatorname{End}_{\mathbb{F}_p}(E_2) \cong O_K$, then there is a prime $q_2$ satisfying $(\ref{e1})$ which can be represented by the form $(q_1,r_1, \frac{r_1^{2}+p}{4q_1})(\ell, b, \frac{b^2+p}{4\ell})^2 $ such that $\operatorname{End}(E_2) \cong \mathcal{O}'(q_2,r_2)$.
\end{theorem}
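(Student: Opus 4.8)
The plan is to push $\phi$ through Deuring's correspondence and to track the effect of the resulting quaternionic ideal on the imaginary-quadratic datum attached to $\operatorname{End}(E)$, reducing the entire statement to a single identity in $C(-p)$.

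First I would reduce to a class-group identity. Because $E_2$ again lies on the surface, $\operatorname{End}_{\mathbb{F}_p}(E_2)\cong O_K$, so Ibukiyama's description forces $\operatorname{End}(E_2)\cong\mathcal{O}'(q_2,r_2)$ for some prime $q_2$ satisfying (\ref{e1}); by Corollary \ref{c1} the isomorphism type of this order is determined by, and determines, the class $[\mathfrak{q}_2]\in C(-p)$ up to inversion, equivalently the reduced forms of discriminant $-p$ representing $q_2$. Since $f_1=(q_1,r_1,(r_1^2+p)/(4q_1))$ is the form of $\mathfrak{q}_1$ and form composition realises multiplication in $C(-p)$ (Proposition \ref{t3}), the theorem is equivalent to the single identity $[\mathfrak{q}_2]=[\mathfrak{q}_1][\mathfrak{l}]^2$, where $\mathfrak{l}=[\ell,(-b+\sqrt{-p})/2]$ is the prime labelling $\phi$ and $g$ its form. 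As $\phi$ is horizontal, its kernel is cut out by one of the two primes above $\ell$, and I fix the labelling so that this prime is $\mathfrak{l}$; the companion horizontal $\ell$-isogeny is labelled by $\overline{\mathfrak{l}}$ and produces $g^{-1}$.

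Next I would compute the right order. Realising $\phi$ as the quaternion ideal $I=\operatorname{End}(E_1)\mathfrak{l}$ and using $\mathfrak{l}\overline{\mathfrak{l}}=\ell O_K$ together with $O_K\subset\operatorname{End}(E_1)$, one gets $\operatorname{End}(E_2)\cong\mathcal{O}_R(I)=\mathfrak{l}^{-1}\operatorname{End}(E_1)\mathfrak{l}$. The decomposition $B_{p,\infty}=K\oplus K\beta$, where $K\beta$ is the subspace anticommuting with $\alpha=\sqrt{-p}$, is intrinsic, and for a maximal order of Ibukiyama type the intersection with $K\beta$ is $\mathfrak{q}^{-1}\beta$, a fractional $O_K$-ideal whose class $[\mathfrak{q}]^{-1}$ is independent of the chosen anticommuting element. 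Conjugating by $\mathfrak{l}$ and using that $\beta$ implements complex conjugation on $K$, so that $\beta\mathfrak{l}=\overline{\mathfrak{l}}\beta$, I find that the $K\beta$-part of $\operatorname{End}(E_2)$ is $\mathfrak{l}^{-1}\overline{\mathfrak{l}}\,\mathfrak{q}_1^{-1}\beta$, of class $[\mathfrak{l}]^{-2}[\mathfrak{q}_1]^{-1}$. Equating this with $[\mathfrak{q}_2]^{-1}$ yields $[\mathfrak{q}_2]=[\mathfrak{q}_1][\mathfrak{l}]^2$, hence $f_2\sim f_1g^2$; this is exactly where the factor of two in the exponent — the new content of the theorem — originates, namely the single swap $\mathfrak{l}\mapsto\overline{\mathfrak{l}}$ forced by the complex-conjugating element $\beta$.

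I expect the main obstacle to be the last bookkeeping step: the class $[\mathfrak{q}_2]$ is only well defined up to inversion, because the abstract isomorphism $\mathcal{O}_R(I)\cong\mathcal{O}'(q_2,r_2)$ is conjugation by some $\gamma\in B_{p,\infty}^{\times}$ which may send $\alpha$ to $-\alpha$ and thereby replace $[\mathfrak{q}_2]$ by $[\mathfrak{q}_2]^{-1}$. This is precisely the twist ambiguity (the inverse form $f^{-1}$), so I would have to check that the labellings already fixed for $E_1$, for $E_2$, and for $\phi$ are mutually consistent, pinning the sign to $g^2$ rather than $g^{-2}$. Concretely this means verifying, with the integral bases of $\mathcal{O}'(q_1,r_1)$ and of $\mathfrak{l}$, that $\mathcal{O}_R(\operatorname{End}(E_1)\mathfrak{l})$ is again of Ibukiyama type $\mathcal{O}'$ and that the embedding of $O_K$ is preserved with the correct orientation; the remaining assertion — that $q_2$ may be chosen to satisfy (\ref{e1}) — then follows from Ibukiyama's theorem together with the existence, by Chebotarev, of primes in the class $[\mathfrak{q}_1][\mathfrak{l}]^2$ with $q_2\equiv 3\pmod 8$ and $\left(\frac{p}{q_2}\right)=-1$.
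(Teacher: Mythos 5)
Your proposal is correct and follows essentially the same route as the paper: identify $\phi$ with the $O_K$-ideal $\mathfrak{l}=[\ell,(-b+\sqrt{-p})/2]$, form the kernel ideal $\mathcal{O}'(q_1,r_1)\mathfrak{l}$, compute its right order as $\mathfrak{l}^{-1}\mathcal{O}'(q_1,r_1)\mathfrak{l}$, and conclude that $\mathfrak{q}_2$ lies in the class $\bar{\mathfrak{q}}_1\mathfrak{l}^{2}P(O_K)$, i.e.\ that the form gets multiplied by $(\ell,b,\frac{b^2+p}{4\ell})^2$. The only divergence is that the paper cites Ibukiyama's Proposition~4.2 for the key conjugation step, whereas you prove it directly from $\mathcal{O}'(q,r')\cap K\beta=\bar{\mathfrak{q}}^{-1}\beta$ and $\beta\mathfrak{l}=\bar{\mathfrak{l}}\beta$ --- which is precisely the computation carried out explicitly in the paper's Proposition~\ref{zt3} for the analogous $\mathcal{O}(q,r)$ case --- and you are right to flag the residual inversion/labelling ambiguity, which the paper also absorbs into its ``we can assume'' normalizations.
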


\begin{proof}
For $p \equiv 3 \pmod 4$, if $\operatorname{End} (E_1) \cong  \mathcal{O}'(q_1,r_1)$, then we can assume $E_1$ corresponds to the quadratic form $(q_1,r_1, \frac{r_1^{2}+p}{4q_1})$. By \cite[Theorem 6.15]{MR1012948}, we can assume $q_1O_K=\mathfrak{q}_1 \bar{\mathfrak{q}}_1$ with $\bar{\mathfrak{q}}_1 =[q_1,\frac{-r_1+\sqrt{-p}}{2}]$.

If $\phi : E_1 \to E_2$ is an isogeny defined over $\mathbb{F}_p$ and $\text{End}_{\mathbb{F}_p}(E_1) \cong \text{End}_{\mathbb{F}_p}(E_2) \cong O_K$, then we can assume $\text{Ker}(\phi)=\{ P \in E_1[\ell] \mid [(-b+\sqrt{-p})/{2}](P)=\infty \}$ with $b^2 \equiv -p \pmod {4\ell}$. Note that $\mathfrak{l}=[\ell, (-b+\sqrt{-p})/2]$ is an ideal in $O_K$, and it corresponds to the form $(\ell, b, \frac{b^2+p}{4\ell})$ by the isomorphism in Proposition \ref{t3}. It follows that $\mathcal{O}'(q_1,r_1)\mathfrak{l}$ is the ideal kernel of $\phi$. Moreover, $\mathfrak{l}^{-1} \mathcal{O}'(q_1,r_1) \mathfrak{l}$ is the right order of $\mathcal{O}'(q_1,r_1)\mathfrak{l}$ with $\mathfrak{l}^{-1}=\bar{\mathfrak{l}}/ \ell$.

By \cite[Proposition 4.2]{MR683249}, we have $\text{End}(E_2) \cong \mathcal{O}'(q_2,r_2)$ for any $q_2$ satisfying (\ref{e1}) which is the norm of a prime ideal $\mathfrak{q}_2 \in \bar{\mathfrak{q}}_1 \mathfrak{l}^{2} P(O_K)$. The ideal $\bar{\mathfrak{q}}_1=[q_1,\frac{-r_1+\sqrt{-p}}{2}]$ corresponds to the form $(q_1,r_1, \frac{r_1^{2}+p}{4q_1})$, so $E_2$ corresponds to the form $(q_2,r_2, \frac{r_2^{2}+p}{4q_2}) \sim (q_1,r_1, \frac{r_1^{2}+p}{4q_1})(\ell, b, \frac{b^2+p}{4\ell})^2$ and $\operatorname{End}(E_2) \cong \mathcal{O}'(q_2,r_2)$.

\end{proof}

\begin{remark}
For $\left( \frac{-p}{\ell} \right)=1$, there are two $\ell$-isogenies $\phi:E_1 \to E_2$ and $\phi':E_1 \to E'_2$ over $\mathbb{F}_p$ with $\operatorname{End}_{\mathbb{F}_p}(E_2) \cong \operatorname{End}_{\mathbb{F}_p}(E'_2) \cong O_K$. The isogeny $\phi'$ corresponds to the form $(\ell, -b, \frac{b^2+p}{4\ell})$, and $E'_2$ corresponds to the form $(q_1,r_1, \frac{r_1^{2}+p}{4q_1})(\ell, -b, \frac{b^2+p}{4\ell})^2$.
\end{remark}

\textbf{Case 2:} $\operatorname{End}_{\mathbb{F}_p}( E_1) \cong \operatorname{End}_{\mathbb{F}_p}( E_2) \cong \mathbb{Z}[\sqrt{-p}]$.
\\

If $\text{End}(E_1) \cong \mathcal{O}(q_1,r_1)$ and $\text{End}(E_2) \cong \mathcal{O}(q_2,r_2)$ and $\text{deg}(\phi)=\ell$ is an odd prime, we have the following proposition.

\begin{proposition}\label{zt3}
Let $\mathfrak{l}$ be an integral ideal of $O=\mathbb{Z}[2\sqrt{-p}]$ where $N(\mathfrak{l})=\ell$ is an odd prime. Put $\mathfrak{q}_1=[q_1,2r_1+2\sqrt{-p}]$. We have an isomorphism: $\mathfrak{l}^{-1} \mathcal{O}(q_1,r_1) \mathfrak{l} \cong \mathcal{O}(q_2,r_2)$ for some $q_2$ satisfying $(\ref{e1})$ which is the norm of a prime ideal $\mathfrak{q}_2 \in \bar{\mathfrak{q}}_1 \mathfrak{l}^{2} P_{\mathbb{Z}}(2)$ (resp. $\mathfrak{q}_2 \in \bar{\mathfrak{q}}_1 \mathfrak{l}^{2} P_{\mathbb{Z}}(4)$) if $p \equiv 1 \pmod 4$ (resp. $p \equiv 3 \pmod 4$).
\end{proposition}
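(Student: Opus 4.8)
The plan is to mirror the proof of Theorem~\ref{zt4}, replacing the maximal order $O_K$ and the surface orders $\mathcal{O}'(q,r)$ by the order $O=\mathbb{Z}[2\sqrt{-p}]$ of discriminant $-16p$ and the floor orders $\mathcal{O}(q,r)$. First I would fix the dictionary between curves, orders, forms and ideals. By the correspondence underlying Theorem~\ref{zt2}, a curve $E_1$ with $\operatorname{End}(E_1)\cong\mathcal{O}(q_1,r_1)$ is matched with the form $(q_1,4r_1,\frac{4r_1^2+4p}{q_1})$ of discriminant $-16p$, and hence by Proposition~\ref{t3} with the proper $O$-ideal $\bar{\mathfrak{q}}_1=[q_1,-2r_1+2\sqrt{-p}]$, the conjugate of $\mathfrak{q}_1=[q_1,2r_1+2\sqrt{-p}]$. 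Because $\ell$ is odd, hence coprime to the index $2=[\mathbb{Z}[\sqrt{-p}]:O]$, the degree-$\ell$ kernel of the horizontal isogeny $\phi$, a priori an ideal of $\mathbb{Z}[\sqrt{-p}]$ of norm $\ell$, may be regarded as an invertible $O$-ideal $\mathfrak{l}$ of norm $\ell$; this is what allows all of the ideal arithmetic to be carried out inside the single class group $C(O)$.

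Next I would realize $\operatorname{End}(E_2)$ as a conjugate order. Embedding $\mathfrak{l}$ into $B_{p,\infty}$ via $\alpha=\sqrt{-p}$, the left $\mathcal{O}(q_1,r_1)$-ideal $\mathcal{O}(q_1,r_1)\mathfrak{l}$ is the ideal-theoretic kernel of $\phi$, so by the Deuring correspondence its right order $\mathcal{O}_R(\mathcal{O}(q_1,r_1)\mathfrak{l})=\mathfrak{l}^{-1}\mathcal{O}(q_1,r_1)\mathfrak{l}$, with $\mathfrak{l}^{-1}=\bar{\mathfrak{l}}/\ell$, is isomorphic to $\operatorname{End}(E_2)$. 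Since $E_2$ is again on the floor by the Case~2 hypothesis, this right order has the form $\mathcal{O}(q_2,r_2)$ for some prime $q_2$ satisfying (\ref{e1}).

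The heart of the argument is the analog of \cite[Proposition 4.2]{MR683249} for the floor orders: conjugating $\mathcal{O}(q_1,r_1)$ by an $O$-ideal sends the parameter to any prime $q_2$ realized as the norm of a prime ideal $\mathfrak{q}_2$ lying in the class of $\bar{\mathfrak{q}}_1\mathfrak{l}^2$ in the ring class group of $O$. I would then pin down that ring class group explicitly: since $O=\mathbb{Z}[2\sqrt{-p}]$ has discriminant $-16p$, its conductor in $O_K$ equals $2$ when $p\equiv 1\pmod 4$ and $4$ when $p\equiv 3\pmod 4$, so the relevant quotient is $I(f)/P_{\mathbb{Z}}(f)$ with $f=2$, respectively $f=4$. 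This is exactly the distinction between $P_{\mathbb{Z}}(2)$ and $P_{\mathbb{Z}}(4)$ in the statement, and it is consistent with Proposition~\ref{p1}(3) and Corollary~\ref{c1}(2).

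I expect the main obstacle to be this last transcription: carefully porting Ibukiyama's Proposition~4.2 from the maximal-order/$P(O_K)$ setting used in Theorem~\ref{zt4} to the floor setting, and verifying that the correct conductor is indeed $f=2$ or $f=4$ according to $p\bmod 4$ rather than, say, $2$ in both cases. Once that is in place, the membership $\mathfrak{q}_2\in\bar{\mathfrak{q}}_1\mathfrak{l}^2P_{\mathbb{Z}}(f)$ is routine bookkeeping. A secondary point requiring care is the passage between ideals of $\mathbb{Z}[\sqrt{-p}]$, where the isogeny kernel naturally lives, and invertible ideals of $O=\mathbb{Z}[2\sqrt{-p}]$, where the form-order correspondence lives; this identification is legitimate precisely because $\ell$ is coprime to the index $[\mathbb{Z}[\sqrt{-p}]:O]=2$.
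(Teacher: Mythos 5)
Your outline matches the paper's framing---reduce to a floor-order analogue of Ibukiyama's Proposition 4.2 and identify the relevant class group as the ring class group of $O=\mathbb{Z}[2\sqrt{-p}]$---but it stops exactly where the actual proof has to begin. The claim you defer as ``the main obstacle,'' namely that $\mathfrak{l}^{-1}\mathcal{O}(q_1,r_1)\mathfrak{l}\cong\mathcal{O}(q_2,r_2)$ with $\mathfrak{q}_2$ in the precise coset $\bar{\mathfrak{q}}_1\mathfrak{l}^{2}P_{\mathbb{Z}}(2)$ (resp.\ $P_{\mathbb{Z}}(4)$), \emph{is} Proposition \ref{zt3}; there is no reference to port it from, and the conductor heuristic you offer does not substitute for a proof. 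Computing that $O$ has conductor $2$ or $4$ only tells you where the answer could naturally live; it does not show that $\mathfrak{q}_2$ lands in the coset of $\bar{\mathfrak{q}}_1\mathfrak{l}^{2}$ rather than in some other coset of $P_{\mathbb{Z}}(f)$ lying over the same class in a coarser quotient. This worry is not vacuous: Proposition \ref{p1}(3) pins down the isomorphism class of $\mathcal{O}(q,r)$ only by the class of $\mathfrak{q}$ modulo $P_{\mathbb{Z}}(2)$ for \emph{both} parities of $p$, so for $p\equiv 3\pmod 4$ the refinement to $P_{\mathbb{Z}}(4)$ (which is what feeds the discriminant-$(-16p)$ form composition in Theorem \ref{zt5}) carries strictly more information than the isomorphism type of the resulting order and cannot be read off from the classification of the $\mathcal{O}(q,r)$'s.

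The paper closes this gap by an explicit element computation in the style of \cite{MR683249}: writing $\mathfrak{l}=[\ell,-b+2\sqrt{-p}]$, choosing an isomorphism $\mathfrak{l}^{-1}\mathcal{O}(q_1,r_1)\mathfrak{l}\cong\mathcal{O}(q_2,r_2)$ that fixes $O$, and setting $\kappa=\ell q_1\beta_2\beta_1^{-1}$, one first shows $2\kappa\in O_K$ from integrality of its norm and trace, then upgrades to $\kappa\in O_K$ by a parity argument that already splits according to $p\bmod 4$; finally, expanding $(\ell q_1+\kappa\beta_1)/2q_1\in\mathcal{O}(q_1,r_1)$ in the integral basis of $\mathcal{O}(q_1,r_1)$ forces $2\mid d$ (resp.\ $4\mid d$) for the relevant coefficient $d$ of $\kappa$, and this congruence is exactly what places $(\kappa)=\mathfrak{q}_1\mathfrak{q}_2\bar{\mathfrak{l}}^{2}$ in $P_{\mathbb{Z}}(2)$ (resp.\ $P_{\mathbb{Z}}(4)$). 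Your surrounding bookkeeping (the odd norm $\ell$ being coprime to the index, the form--ideal dictionary via Proposition \ref{t3} and Theorem \ref{zt2}) is fine, but without this computation the proposal does not prove the proposition.
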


\begin{proof}
We can assume $\mathfrak{l}=[\ell,-b+2\sqrt{-p}]$ with $b^2 \equiv -4p \pmod {\ell}$.
We have $\mathfrak{l}^{-1} \mathcal{O}(q_1,r_1) \mathfrak{l} \subset(O / 2)+\mathfrak{q}_1 \bar{\mathfrak{l}}^{2} \beta_1 / 4 q_1 \ell$. We can take an isomorphism $\mathfrak{l}^{-1} O(q_1,r_1) \mathfrak{l} \cong \mathcal{O}(q_2,r_2)$ for some $q_2$ satisfying $(\ref{e1})$ such that the image of $O$ is $O$. Let $\beta_2$ be the image of $\beta_{q_2}$ of this isomorphism. So we have $\beta_2 \in \mathfrak{q}_1 \bar{\mathfrak{l}}^{2} \beta_1 / 4 q_1 \ell$. Put $\kappa=\ell q_1 \beta_2 \beta_1^{-1}$. Then $N(2\kappa)=4\ell^{2} q_1 q_2 \in \mathbb{Z}$ and $\operatorname{tr}(2\kappa) \in \mathbb{Z}$, since $2\kappa \in \mathfrak{q}_1 \bar{\mathfrak{l}}^{2}/2 \subset O / 2 $. It follows that $2 \kappa \in O_K$.

If $p \equiv 1 \pmod 4$, we can write $2\kappa =a+b\sqrt{-p}$ with $a, b \in \mathbb{Z}$, and we have $N(2\kappa)=a^2+pb^2$. Since $4 \mid N(2k)$ and $p \equiv 1 \pmod 4$, we must have $2 \mid a$ and $2 \mid b$. This means $\kappa \in O_K$.

If $p \equiv 3 \pmod 4$, then $2 \kappa \in O/2 \cap O_K =\mathbb{Z}+\mathbb{Z} \sqrt{-p}$. Write $2\kappa =a+b\frac{1+\sqrt{-p}}{2}$ with $2 \mid b$, so $N(2\kappa)=a^2+ab+\frac{1+p}{4} b^2 \equiv a^2+ab \pmod 4$. We have $2 \mid a$ since $4 \mid N(2\kappa)$. It follows that $\kappa \in O_K$.

Moreover, we have
$$
\left(1+\beta_2\right) / 2 \in \mathfrak{l}^{-1} \mathcal{O}(q_1,r_1) \mathfrak{l}, \quad \text { so } \quad  (\ell q_1+\kappa \beta_1) / 2 q_1 \in \ell \mathfrak{l}^{-1} \mathcal{O}(q_1,r_1) \mathfrak{l} \subset \mathcal{O}(q_1,r_1).
$$
If $p \equiv 1 \pmod 4$, putting $\kappa=c+d \sqrt{-p}$, we have
$$
(\ell q_1+\kappa \beta_1) / 2 q_1=(\ell q_1-c+d r) / 2 q_1+(c-d r)(1+\beta_1) / 2 q_1+d(r+\sqrt{-p}) \beta_1 / 2 q_1.
$$
So we have $2 \mid d$ and $(\kappa) \in P_{\mathbb{Z}}(2)$. Furthermore, since $N(\kappa) =\ell^2 q_1 q_2$ and $\mathfrak{q}_1 \bar{\mathfrak{l}}^2 \mid (\kappa)$, there is an integral ideal $\mathfrak{q}_2$ such that $(\kappa)=\mathfrak{q}_1 \mathfrak{q}_2 \bar{\mathfrak{l}}^{2}$ and $N\left(\mathfrak{q}_2\right)=q_2$.

If $p \equiv 3 \pmod 4$, putting $\kappa=c+d \frac{1+\sqrt{-p}}{2}$, we have
$$
(\ell q_1+\kappa \beta_1) / 2 q_1=(2\ell q_1-2c-d+d r) / 4 q_1+(2c+d-d r)(1+\beta_1) / 4 q_1+d(r+\sqrt{-p}) \beta_1 / 4q_1.
$$
So we have $4 \mid d$ and $(\kappa) \in P_{\mathbb{Z}}(4)$. Similarly, there is an integral ideal $\mathfrak{q}_2$ such that $(\kappa)=\mathfrak{q}_1 \mathfrak{q}_2 \bar{\mathfrak{l}}^{2}$ and $N\left(\mathfrak{q}_2\right)=q_2$.

\end{proof}

If $p \equiv 1 \pmod 4$ and $\phi: E_1 \to E_2$ is a $2$-isogeny over $\mathbb{F}_p$ with $\operatorname{End}_{\mathbb{F}_p}(E_1) \cong \operatorname{End}_{\mathbb{F}_p}(E_2) \cong \mathbb{Z}[\sqrt{-p}]$, then the isogeny $\phi$ corresponds to the ideal $[2, \sqrt{-p}-1]$. Because the ideals $[2, \sqrt{-p}-1]$ and $[\frac{p+1}{2}, \sqrt{-p}-1]$ are equivalent in $C(\mathbb{Z}[\sqrt{-p}])$, there exists an isogeny $\psi:E_1 \to E_2$ defined over $\mathbb{F}_p$ with kernel $\text{Ker}(\psi):=\{ P \in E_1[\frac{p+1}{2}] \mid [\sqrt{-p}-1]P = \infty \}$. We also have $\text{Ker}(\psi)=\{ P \in E_1[\frac{p+1}{2}] \mid [2\sqrt{-p}-2]P = \infty \}$ since $\frac{1+p}{2}$ is odd. Moreover, the kernel ideal of $\psi$ is $[\frac{p+1}{2}, 2\sqrt{-p}-2]$, which corresponds to the form $(\frac{p+1}{2},4, 8)\sim (8 ,-4, \frac{p+1}{2})$.

\begin{proposition}
Let $p \equiv 1 \pmod 4$ and $O=\mathbb{Z}[2\sqrt{-p}]$. The ideal $\mathfrak{l}=[8, 2+2 \sqrt{-p}]$ is proper in $O$. Put $\mathfrak{q}_1=[q_1, 2r_1+2\sqrt{-p}]$. We have an isomorphism: $\mathfrak{l}^{-1} \mathcal{O}(q_1,r_1) \mathfrak{l} \cong \mathcal{O}(q_2,r_2)$ for some $q_2$ satisfying (\ref{e1}) which is the norm of a prime ideal $\mathfrak{q}_2 \in \bar{\mathfrak{q}}_1 \mathfrak{l}^{2} P_{\mathbb{Z}}(2)$.
\end{proposition}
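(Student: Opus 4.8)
The plan is to transplant the proof of Proposition~\ref{zt3} in its $p\equiv 1\pmod 4$ form, the only novelty being that $N(\mathfrak{l})=8$ is a power of $2$ and therefore shares the prime $2$ with the conductor $[O_K:O]=2$; this is exactly where I expect the difficulty to lie.

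I would first check that $\mathfrak{l}=[8,2+2\sqrt{-p}]$ is proper. With $D=-16p$ one has $2+2\sqrt{-p}=(4+\sqrt{D})/2$, so by Proposition~\ref{t3} the ideal $\mathfrak{l}$ corresponds to the form $(8,-4,(p+1)/2)$ of discriminant $-16p$. Since $p\equiv 1\pmod 4$ makes $(p+1)/2$ odd, $\gcd(8,4,(p+1)/2)=1$, the form is primitive, and hence $\mathfrak{l}$ is a proper $O$-ideal with $\mathfrak{l}\bar{\mathfrak{l}}=8\,O$.

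The device that avoids the $2$-adic trouble is to conjugate not by $\mathfrak{l}$ but by an odd-norm ideal in the same class. If $\mathfrak{m}=\gamma\mathfrak{l}$ with $\gamma\in K^{\times}$, then $\mathfrak{m}^{-1}\mathcal{O}(q_1,r_1)\mathfrak{m}$ is the image of $\mathfrak{l}^{-1}\mathcal{O}(q_1,r_1)\mathfrak{l}$ under the inner automorphism $x\mapsto\gamma^{-1}x\gamma$ of $B_{p,\infty}$, so the isomorphism class of the conjugate depends only on $[\mathfrak{l}]\in C(O)$. The form $(8,-4,(p+1)/2)$ is equivalent to $(\frac{p+1}{2},4,8)$, corresponding by Proposition~\ref{t3} to the ideal $\mathfrak{m}=[\frac{p+1}{2},2\sqrt{-p}-2]$ of odd norm $(p+1)/2$, which is prime to the conductor $2$ and represents $[\mathfrak{l}]$. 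Working with $\mathfrak{m}$, the conjugate $\mathfrak{m}^{-1}\mathcal{O}(q_1,r_1)\mathfrak{m}$ is a maximal order still containing $\sqrt{-p}$ (this element commutes with $\mathfrak{m}$ and multiplies the order into itself), and since $p\equiv 1\pmod 4$ every such maximal order is of Ibukiyama's type $\mathcal{O}(q,r)$; hence the conjugate is isomorphic to some $\mathcal{O}(q_2,r_2)$ with $q_2$ satisfying~(\ref{e1}). The remaining computation is identical to Proposition~\ref{zt3} with $\ell$ replaced by the odd integer $(p+1)/2$: setting $\kappa=\frac{p+1}{2}q_1\beta_2\beta_1^{-1}$ and $\kappa=c+d\sqrt{-p}$, the condition $4\mid N(2\kappa)$ together with $p\equiv 1\pmod 4$ gives $\kappa\in O_K$, and the membership $(\frac{p+1}{2}q_1+\kappa\beta_1)/2q_1\in\mathcal{O}(q_1,r_1)$ forces $2\mid d$, so $(\kappa)\in P_{\mathbb{Z}}(2)$ and $\mathfrak{q}_2\in\bar{\mathfrak{q}}_1\mathfrak{m}^{2}P_{\mathbb{Z}}(2)$. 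As $[\mathfrak{m}]=[\mathfrak{l}]$ in $C(O)\cong I(2)/P_{\mathbb{Z}}(2)$, this coset equals $\bar{\mathfrak{q}}_1\mathfrak{l}^{2}P_{\mathbb{Z}}(2)$, as required.

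The main obstacle is the prime $2$. A direct conjugation by $\mathfrak{l}$ would yield $N(\kappa)=64\,q_1q_2$, an even norm incompatible with $(\kappa)\in P_{\mathbb{Z}}(2)\subset I(2)$, and the divisibility steps of Proposition~\ref{zt3} use $\gcd(\ell,2)=1$ in an essential way, so they do not survive an even $N(\mathfrak{l})$. The real content of the proof is thus the passage to the odd-norm representative $\mathfrak{m}$, the verification (via the inner automorphism above) that the conjugate's isomorphism type is a class invariant, and the identification of the two cosets; granting these, the integrality and parity computations are word for word those of the odd case.
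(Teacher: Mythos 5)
Your proposal is correct and follows the route the paper intends: the paper's own proof is the single line ``Similar to the proof of Proposition \ref{zt3},'' and the odd-norm representative $[\frac{p+1}{2}, 2\sqrt{-p}-2]$ of the class of $\mathfrak{l}$ that you conjugate by is exactly the ideal the paper sets up in the paragraph preceding the proposition (where the equivalence $(8,-4,\frac{p+1}{2})\sim(\frac{p+1}{2},4,8)$ and the kernel ideal $[\frac{p+1}{2},2\sqrt{-p}-2]$ are identified). Your observation that a literal transplant of the $\kappa$-computation fails for the even-norm ideal $\mathfrak{l}$ itself, so that one must first pass to this equivalent prime-to-$2$ ideal via conjugation by some $\gamma\in K^{\times}$ (which commutes with $\mathfrak{l}$ and so only changes the conjugate order by an inner automorphism), is the one genuine point of content, and you handle it correctly.
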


\begin{proof}
Similar to the proof of Proposition \ref{zt3}.

\end{proof}

If $\ell$ is an odd prime, then $\mathfrak{l}=[\ell, -b+2\sqrt{-p}]$ with $b^2 \equiv -4p \pmod {\ell}$. If $\ell=2$ and $p \equiv 1 \pmod 4$, then $\mathfrak{l}=[8,2+2\sqrt{-p}]$. In this case, $\bar{\mathfrak{l}}=[8,-2+2\sqrt{-p}]$ and $\mathfrak{l}^2=\bar{\mathfrak{l}}^2=[4,2\sqrt{-p}]$. We define the following quadratic form $l$.
\begin{equation}\label{5}
l=\left \{
  \begin{array}{ll}
  (8 ,-4, \frac{p+1}{2}) & \text{if} \ \ell=2 ; \\
  (\ell, 2b,\frac{b^2+4p}{\ell}) & \text{if} \ \ell \  \text{is odd .}
  \end{array} \right.
\end{equation}

\begin{theorem}\label{zt5}
Let $E_1$ be a supersingular elliptic curve defined over $\mathbb{F}_p$ with $\operatorname{End} (E_1) \cong  \mathcal{O}(q_1,r_1)$. If $\phi : E_1 \to E_2$ is an isogeny defined over $\mathbb{F}_p$ of degree $\ell$ with $\operatorname{End}_{\mathbb{F}_p}(E_2) \cong \mathbb{Z}[\sqrt{-p}]$, then there is a prime $q_2$ satisfying $(\ref{e1})$ which can be represented by the form $(q_1,4r_1, \frac{4r_1^{2}+4p}{q_1})l^2 $ such that $\operatorname{End}(E_2) \cong \mathcal{O}(q_2,r_2)$, where $l$ is defined in $(\ref{5})$.
\end{theorem}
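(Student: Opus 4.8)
The plan is to mirror the proof of Theorem~\ref{zt4}, with the surface order $O_K$ and Ibukiyama's Proposition~4.2 replaced by the floor order and the algebraic isomorphisms already established in Proposition~\ref{zt3} (for odd $\ell$) and in the unlabeled proposition treating $\ell=2$ with $p\equiv1\pmod 4$. Since every form in the statement has discriminant $-16p$, all ideals will live in the order $O=\mathbb{Z}[2\sqrt{-p}]$, whose conductor $f$ is $2$ when $p\equiv1\pmod4$ and $4$ when $p\equiv3\pmod4$; I will pass between forms and ideal classes through the isomorphism $\iota:C(-16p)\to C(O)$ of Proposition~\ref{t3}, which sends Gauss composition to ideal multiplication.

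First I would set up the form-ideal dictionary. With $\mathfrak{q}_1=[q_1,2r_1+2\sqrt{-p}]$ as in Proposition~\ref{zt3}, the norm-form recipe of the remark after Proposition~\ref{t3} shows that $\bar{\mathfrak{q}}_1$ corresponds to $(q_1,4r_1,\tfrac{4r_1^2+4p}{q_1})$. I would then verify that the ideal $\mathfrak{l}$ cut out by $\phi$ corresponds to $l$: for odd $\ell$, the ideal $[\ell,-b+2\sqrt{-p}]$ with $b^2\equiv-4p\pmod\ell$ gives $(\ell,2b,\tfrac{b^2+4p}{\ell})$, and for $\ell=2$ the ideal $[8,2+2\sqrt{-p}]$ gives $(8,-4,\tfrac{p+1}{2})$, both of discriminant $-16p$.

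Next I would carry out the geometric identification as in Theorem~\ref{zt4}. Because $\phi$ is a horizontal $\ell$-isogeny between floor curves, its kernel is $\operatorname{End}_{\mathbb{F}_p}(E_1)$-stable and annihilated by $\mathfrak{l}$, so the left $\mathcal{O}(q_1,r_1)$-ideal attached to $\phi$ is $\mathcal{O}(q_1,r_1)\mathfrak{l}$ and $\operatorname{End}(E_2)$ is isomorphic to its right order $\mathfrak{l}^{-1}\mathcal{O}(q_1,r_1)\mathfrak{l}$. By Proposition~\ref{zt3} (resp. its $\ell=2$ analogue) this right order is $\mathcal{O}(q_2,r_2)$ for a prime $q_2$ satisfying~(\ref{e1}) that is the norm of a prime ideal $\mathfrak{q}_2\in\bar{\mathfrak{q}}_1\mathfrak{l}^2P_{\mathbb{Z}}(f)$. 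Since $P_{\mathbb{Z}}(f)$ is trivial in $C(O)$ and $\iota$ converts the product $\bar{\mathfrak{q}}_1\mathfrak{l}^2$ into the composed form $(q_1,4r_1,\tfrac{4r_1^2+4p}{q_1})\,l^2$, Proposition~\ref{t3}(3) shows that $q_2=N(\mathfrak{q}_2)$ is represented by this form; multiplication by the square $l^2$ preserves the non-principal genus, and the inverse form yields the same $j$-invariant, exactly as in Theorem~\ref{zt4}.

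The step I expect to be the main obstacle is the conductor bookkeeping. Unlike Case~1, here $\operatorname{End}_{\mathbb{F}_p}(E_1)=\mathbb{Z}[\sqrt{-p}]$ strictly contains the order $O=\mathbb{Z}[2\sqrt{-p}]$ in which $\mathfrak{l}$ is defined, so I must check that the $O$-ideal $\mathfrak{l}$ of Proposition~\ref{zt3} genuinely describes the kernel of $\phi$. For odd $\ell$ this is immediate since $\ell$ is prime to the conductor, but the case $\ell=2$ (which arises only for $p\equiv1\pmod4$) is delicate and depends on the explicit rewriting of the $2$-isogeny kernel as $[8,2+2\sqrt{-p}]$ recorded before the statement; fitting both cases under the single definition~(\ref{5}) is where the care concentrates.
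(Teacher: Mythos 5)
Your proposal is correct and follows exactly the route the paper intends: its own proof of Theorem~\ref{zt5} is literally ``similar to the proof of Theorem~\ref{zt4},'' with Ibukiyama's Proposition~4.2 replaced by Proposition~\ref{zt3} and its $\ell=2$ analogue, which is precisely the substitution you make. Your elaboration of the form--ideal dictionary in $\mathbb{Z}[2\sqrt{-p}]$ and the conductor bookkeeping for $\ell=2$ fills in details the paper leaves implicit, but the argument is the same.
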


\begin{proof}
Similar to the proof of Theorem \ref{zt4}.

\end{proof}

\textbf{Case 3:} $\operatorname{End}_{\mathbb{F}_p}( E_1) \ncong \operatorname{End}_{\mathbb{F}_p}( E_2)$.
\\

Considering the structure of isogeny graphs of supersingular elliptic curves over $\mathbb{F}_p$, we can restrict to $\ell=2$ and $p \equiv 3 \pmod 4$.

Let $p \equiv 3 \pmod 4$ be a prime and $q$ a prime satisfying (\ref{e1}). As we know, $r^2+p \equiv 0 \pmod {4q}$ implies $r^2+p \equiv 0 \pmod {q}$. On the contrary, if $r$ satisfies the equation $x^2+p \equiv 0 \pmod {q}$, then $r$ or $r+q$ satisfies the equation $x^2+p \equiv 0 \pmod {4q}$. It follows that $\mathcal{O}(q,r)$ and $\mathcal{O}'(q,r)$ are maximal orders in $B_{p,\infty}$ for the same $q$ and $r$. The following theorem tells us the relations between $\mathcal{O}(q,r)$ and $\mathcal{O}'(q,r)$.

\begin{theorem}\label{zt6}
Let $p \equiv 3 \pmod 4$ be a prime and $E_2$ a supersingular elliptic curve over $\mathbb{F}_p$ with $j(E_2) \neq 1728$. If the endomorphism ring of $E_2$ is isomorphic to $\mathcal{O}(q,r)$, then there exists a $2$-isogeny $\phi : E_1 \to E_2$ such that the endomorphism ring of $E_1$ is $\mathcal{O}'(q,r)$.
\end{theorem}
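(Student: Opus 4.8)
The plan is to realize $\phi$ as the isogeny attached, via Deuring's correspondence, to an explicit connecting ideal of reduced norm $2$ between the maximal orders $\mathcal{O}'(q,r)$ and $\mathcal{O}(q,r)$. First I would fix the qualitative picture. Since $\operatorname{End}(E_2)\cong\mathcal{O}(q,r)$ contains $\frac{1+\beta}{2}$ but not $\frac{1+\pi}{2}=\frac{1+\alpha}{2}$, the curve $E_2$ lies on the floor; and because $j(E_2)\neq 1728$, the Remark following Proposition \ref{p1} gives $|\operatorname{End}(E_2)^{\times}|\neq 4$, so Proposition \ref{p1}(1) shows $\mathcal{O}(q,r)\not\cong\mathcal{O}'(q,r)$ --- the two orders, both maximal for the same $q,r$ as noted just before the theorem, are genuinely distinct. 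The target to construct, $E_1$, should sit on the surface with $\operatorname{End}(E_1)\cong\mathcal{O}'(q,r)$.

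The heart of the argument is a local comparison of the two orders. From the explicit $\mathbb{Z}$-bases I would verify that $\mathcal{O}(q,r)$ and $\mathcal{O}'(q,r)$ induce the same $\mathbb{Z}_\ell$-order for every prime $\ell\neq 2$: they agree at $p$, where the local algebra is a division algebra with a unique maximal order, and at every odd $\ell$ --- in particular at $\ell=q$, where $2$ is a unit and so $\frac{(r+\alpha)\beta}{q}$ and $\frac{(r+\alpha)\beta}{2q}$ span the same $\mathbb{Z}_q$-module. At $\ell=2$ they differ, since $\frac{1+\alpha}{2}\in\mathcal{O}'(q,r)\setminus\mathcal{O}(q,r)$ while $\frac{1+\beta}{2}\in\mathcal{O}(q,r)\setminus\mathcal{O}'(q,r)$; computing the local index I would show that $\mathcal{O}(q,r)_2$ and $\mathcal{O}'(q,r)_2$ are maximal orders at distance one in the Bruhat--Tits tree of the split algebra $B_{p,\infty}\otimes\mathbb{Q}_2\cong M_2(\mathbb{Q}_2)$. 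Hence $\mathcal{O}(q,r)\cap\mathcal{O}'(q,r)$ is an Eichler order of level $2$, and standard quaternion-order theory furnishes an ideal $I$ with $\mathcal{O}_L(I)=\mathcal{O}'(q,r)$, $\mathcal{O}_R(I)=\mathcal{O}(q,r)$, and $\operatorname{Nrd}(I)=2$.

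With $I$ in hand, Deuring's correspondence produces the isogeny. Regarding $I$ as a left $\mathcal{O}'(q,r)$-ideal of norm $2$, it is the kernel ideal of a degree-$2$ isogeny $\phi:E_1\to E_2$ with $\operatorname{End}(E_1)\cong\mathcal{O}_L(I)=\mathcal{O}'(q,r)$ and $\operatorname{End}(E_2)\cong\mathcal{O}_R(I)=\mathcal{O}(q,r)$, the target being our given $E_2$ under the fixed isomorphism $\operatorname{End}(E_2)\cong\mathcal{O}(q,r)$. Since $\frac{1+\pi}{2}\in\mathcal{O}'(q,r)=\operatorname{End}(E_1)$, the curve $E_1$ lies on the surface, so both $E_1$ and $E_2$ are defined over $\mathbb{F}_p$ and $\phi$ is a surface-to-floor $2$-isogeny of exactly the kind recorded in the $\mathbb{F}_p$-isogeny graph (see \cite[Theorem 2.7]{MR3451433}); this gives the required $\phi$.

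I expect the main obstacle to be the local analysis at $2$: proving rigorously, from the two explicit bases, that the orders sit at distance exactly one in the tree (neither coinciding --- already ruled out --- nor at distance $\geq 2$) and packaging this as a concrete norm-$2$ ideal with the prescribed left and right orders. A secondary difficulty is the $\mathbb{F}_p$-rationality of $\phi$: rather than relying on Deuring over $\overline{\mathbb{F}}_p$ alone, I would identify $\ker\phi$ with the $\mathbb{F}_p$-rational order-$2$ subgroup whose quotient raises $\mathbb{Z}[\sqrt{-p}]$ to $O_K$, i.e. the kernel of the vertical $2$-isogeny in the isogeny-graph structure, thereby ensuring that $\phi$ descends to $\mathbb{F}_p$.
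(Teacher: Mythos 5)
Your proposal is correct in outline and shares the paper's overall architecture: both arguments reduce the theorem to showing that $\mathcal{O}(q,r)$ and $\mathcal{O}'(q,r)$ are distinct maximal orders whose intersection is an Eichler order of level $2$, whence a connecting ideal of reduced norm $2$ and, via the Deuring correspondence, a $2$-isogeny between the corresponding curves. Where you genuinely diverge is in how that key fact is established. You propose a purely local analysis (agreement of the two orders at every prime $\ell\neq 2$, then a Bruhat--Tits distance computation at $\ell=2$), and you correctly flag the ``distance exactly one'' verification as the main outstanding work. The paper short-circuits precisely this step by writing down the explicit global order
$\tilde{\mathcal{O}}(q,r)=\mathbb{Z}+\mathbb{Z}\beta+\mathbb{Z}\frac{1+\alpha+\beta+\alpha\beta}{2}+\mathbb{Z}\frac{(r+\alpha)\beta}{q}$,
checking that it lies in both maximal orders, and computing its discriminant to be $4p^2$: since a maximal order of $B_{p,\infty}$ has discriminant $p^2$, the index of $\tilde{\mathcal{O}}(q,r)$ in each maximal order is $2$, and the non-isomorphism $\mathcal{O}(q,r)\ncong\mathcal{O}'(q,r)$ for $j(E_2)\neq 1728$ (the same Remark-based argument you give) forces $\tilde{\mathcal{O}}(q,r)=\mathcal{O}(q,r)\cap\mathcal{O}'(q,r)$, an Eichler order of level $2$. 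That single discriminant computation is exactly the tool that discharges your ``main obstacle''; your local route would also work, but it requires exhibiting explicit $\mathbb{Z}_2$-bases and comparing lattices in $M_2(\mathbb{Q}_2)$, which is more labor than the global check. One point in your favor: you address the $\mathbb{F}_p$-rationality of $\phi$ (by identifying $\ker\phi$ with the kernel of the vertical $2$-isogeny in the Delfs--Galbraith graph structure), a step the paper passes over in silence.
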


\begin{proof}
If $j(E_2) \neq 1728$, for the same prime $q$, we consider the order
$$\tilde{\mathcal{O}}(q,r)=\mathbb{Z}+\mathbb{Z}\beta+\mathbb{Z}\frac{1+\alpha+\beta+\alpha\beta}{2} +\mathbb{Z}\frac{(r+\alpha)\beta}{q}.$$
It is easy to show $\tilde{\mathcal{O}}(q,r) \subseteq \mathcal{O}'(q,r) \cap \mathcal{O}(q,r)$, and the discriminant of $\tilde{\mathcal{O}}(q,r)$ is $4p^2$. Since $j(E_2) \neq 1728$, we have $\mathcal{O}'(q,r') \ncong \mathcal{O}(q,r)$ by Proposition \ref{p1}. It follows that $[\mathcal{O}'(q,r') : \tilde{\mathcal{O}}(q)]=[\mathcal{O}(q,r) : \tilde{\mathcal{O}}(q)] =2$ and $\tilde{\mathcal{O}}(q,r) = \mathcal{O}'(q,r) \cap \mathcal{O}(q,r)$. So there exists a $2$-isogeny $\phi : E_1 \to E_2$ with $\operatorname{End}(E_1) \cong \mathcal{O}'(q,r)$.

\end{proof}

\begin{remark}
If $j(E_2) = 1728$, we can assume $E_2:y^2=x^3+4x$. There exists a $2$-isogeny $\phi:E_1 \to E_2$ with $E_1:y^2=x^3-x$ and $\phi(x,y)=(\frac{x^2-1}{x}, \frac{x^2y+y}{x^2})$. Moreover, we have $j(E_1)=1728$ and $\operatorname{End}(E_1) \cong \operatorname{End}(E_2)$.
\end{remark}

In fact, the curve $E_1$ is on the surface and $E_2$ is on the floor. If $p \equiv 7 \pmod 8$, there is only one down $2$-isogeny from $E_1$. If $E_1$ corresponds to the form $(q,r,\frac{r^2+p}{4q})$ with $q$ satisfying $(\ref{e1})$, then $E_2$ corresponds to the form $(q,4r,\frac{4r^2+4p}{q})$.

If $p \equiv 3 \pmod 8$, then there are three down $2$-isogenies from $E_1$. Denote them by $E_{2,i}$ with $i \in \{1,2,3\}$. If $E_1$ corresponds to the form $(q,r,\frac{r^2+p}{4q})$, by Proposition \ref{p2}, there are three forms with discriminant $-16p$ in the non-principal genus that can be derived from $(q,r,\frac{r^2+p}{4q})$. Moreover, each form corresponds to one of $\{E_{2,i}\}_{i \in \{1,2,3\}}$. On the contrary, suppose that the curve $E_{2,i}$ corresponds to the form $(q_i,4r_i,\frac{4r_i^2+4p}{q_i})$ with $q_i$ satisfying $(\ref{e1})$. For every $i$, the form $(q_i,4r_i,\frac{4r_i^2+4p}{q_i})$ can be derived from $(q_i,r_i,\frac{r_i^2+p}{4q_i})$. It follows that the three forms $(q_i,r_i,\frac{r_i^2+p}{4q_i}),i \in \{1,2,3\}$ are equivalent in $C(-p)$ and the curve $E_1$ corresponds to every one of these three forms.

We use the curves in Example \ref{ex1} to illustrate results in this section.

\begin{example}
The notations are as in Example \ref{ex1}. The followings are $2$-isogeny graphs of supersingular elliptic curves over $\mathbb{F}_{83}$.

\begin{center}
$\xymatrix{
& &\tilde{E}_0\ar@{-}[d] &  \\
& &E_0 \ar@{-}[ld] \ar@{-}[rd] &  \\
&E_4 && \tilde{E}_4
}$
$\xymatrix{
& &E_5\ar@{-}[d] & \\
& &E_1 \ar@{-}[ld] \ar@{-}[rd] &\\
&\tilde{E}_3 &&E_2
}$
$\xymatrix{
& &\tilde{E}_5\ar@{-}[d] & \\
& &\tilde{E}_1 \ar@{-}[ld] \ar@{-}[rd] &\\
&E_3 &&\tilde{E}_2
}$
\end{center}

Considering the corresponding forms, we have the following graphs.

\begin{center}
$\xymatrix{
& &g_0\ar@{-}[d] &  \\
& &f_0 \ar@{-}[ld] \ar@{-}[rd] &  \\
&g_4 && g_4^{-1}
}$
$\xymatrix{
& &g_5\ar@{-}[d] & \\
& &f_1 \ar@{-}[ld] \ar@{-}[rd] &\\
&g_3^{-1} &&g_2
}$
$\xymatrix{
& &g_5^{-1} \ar@{-}[d] & \\
& &f_1^{-1} \ar@{-}[ld] \ar@{-}[rd] &\\
&g_3 &&g_2^{-1}
}$
\end{center}

Note that the forms $g_0$, $g_4$ and $g_4^{-1}$ are derived from $f_0$, and the other two graphs are similar. These examples illustrate Theorem \ref{zt6}. In order to illustrate Theorems \ref{zt4} and \ref{zt5}, we compute the $3$-isogeny graphs of supersingular elliptic curves over $\mathbb{F}_{83}$.
$$\xymatrix{
& E_0 \ar@{-}[ld] \ar@{-}[rd] & \\
E_1 \ar@{-}[rr] & & \tilde{E}_1
}$$
$$\xymatrix{
& E_2 \ar@{-}[d] \ar@{-}[r]  &E_3 \ar@{-}[r] &E_4 \ar@{-}[r] &E_5 \ar@{-}[dd]  \\
& \tilde{E}_0 \ar@{-}[d] & & & \\
& \tilde{E}_2 \ar@{-}[r]  &\tilde{E}_3 \ar@{-}[r] &\tilde{E}_4 \ar@{-}[r] &\tilde{E}_5
}$$
By considering the corresponding forms, we have the following graphs.
$$\xymatrix{
& f_0 \ar@{<-}[ld] \ar@{->}[rd] & \\
f_1 \ar@{<-}[rr] & & f_1^{-1}
}$$
Every arrow represents the composition with the form $(3,1,7)^2$.

$$\xymatrix{
& g_2 \ar@{<-}[d] \ar@{->}[r]  &g_3 \ar@{->}[r] &g_4 \ar@{->}[r] &g_5 \ar@{->}[dd]  \\
& g_0 \ar@{<-}[d] & & & \\
& g_2^{-1} \ar@{<-}[r]  &g_3^{-1} \ar@{<-}[r] &g_4^{-1} \ar@{<-}[r] &g_5^{-1}
}$$
Every arrow represents the composition with the form $(3,2,111)^2$.
\end{example}

\section{Applications in CSIDH}

In this section, we recall the encipherment of the CSIDH cryptosystem, and apply our results to analyze the security of it.

Let $k\in \mathbb{Z}_{+}$ and consider a prime $p$ of the form $p=4\ell_1\ell_2 \ldots \ell_k-1$, where the $\ell_i$'s are distinct odd prime numbers. This implies $p \equiv 3 \pmod 8$. CSIDH chooses the curve in the set Iso$(\mathbb{F}_p,\pi)$, which contains all the $\mathbb{F}_p$-isomorphism classes of supersingular elliptic curves with endomorphism ring $\mathcal{O}(q,r)$.

The starting curve of CSIDH is $E_0: y^2=x^3+x$, which corresponds to the form $(4,0,p)$. Alice (resp. Bob) secretly sample $[\mathfrak{a}]$ (resp. $[\mathfrak{b}]$) from the class group of $\mathbb{Z}[\sqrt{-p}]$, compute $E_A=[\mathfrak{a}]E_0$ (resp. $E_B=[\mathfrak{b}]E_0)$, and publish the result. The shared secret is then $E_{AB}=[\mathfrak{a}][\mathfrak{b}]E_0$, which Alice computes as $[\mathfrak{a}]([\mathfrak{b}]E_0)$ and Bob computes as $[\mathfrak{b}]([\mathfrak{a}]E_0)$.

We can write the ideal $[\mathfrak{a}]$ (resp. $[\mathfrak{b}]$) as a quadratic form $l_A$ (resp, $l_B$), and the curve $E_A$ (resp. $E_B$) corresponds to the form $f_A=(4,0,p)l^2_A$ (resp. $f_B=(4,0,p)l^2_B$). Since $f_A f_B=(4,0,p)^2l^2_A l^2_B=l^2_A l^2_B$, the shared secret curve $E_{AB}$ corresponds to the form $f_{AB}=(4,0,p)l^2_A l^2_B=(4,0,p)f_A f_B$.
In order to compute the shared secret of CSIDH, we need to compute the quadratic forms $f_A$ and $f_B$ from the curve $E_A$ and $E_B$ and recover the secret curve $E_{AB}$ from the form $f_{AB}$.

Given a supersingular elliptic curve $E_A$ over $\mathbb{F}_p$, if we know its endomorphism ring $\mathcal{O}(q_1,r_1)$, then it corresponds to the form $(q_1,4r_1,\frac{4r_1^2+4p}{q_1})$ or $(q_1,-4r_1,\frac{4r_1^2+4p}{q_1})$ by Theorem \ref{zt2}. Similarly, the curve $E_B$ corresponds to the form $(q_2,4r_2,\frac{4r_2^2+4p}{q_2})$ or $(q_2,-4r_2,\frac{4r_2^2+4p}{q_2})$. In general, we compute 4 forms and one of them corresponds to the curve $E_{AB}$.

Given a reduced quadratic form, one can compute the endomorphism ring $\mathcal{O}(q,r)$ in polynomial time in length $p$ under GRH (see \cite[Proposition 3.4]{Wesolowski}). Moreover, one can compute the corresponding $j$-invariant by KLPT algorithm in polynomial time \cite{KLPT}.

As we discussed above, if one can compute the binary quadratic forms which correspond to $E_A$ and $E_B$, then it takes polynomial time in length $p$ to get the shared secret of CSIDH. Compared with the method in \cite{MR4095284}, we do not need to solve the discrete logarithm problem in the class group of $\mathbb{Z}[\sqrt{-4p}]$, which is in general quite difficult.

\section{Conclusion}
For a supersingular elliptic curve defined over $\mathbb{F}_p$, Ibukiyama's results show that its endomorphism ring is isomorphic to one of the maximal orders $\mathcal{O}(q,r)$'s and $\mathcal{O}'(q,r')$'s. We show that a quadratic form in the maximal order plays a crucial role, and the actions of $\mathbb{F}_p$-isogenies on elliptic curves is compatible with the composition of quadratic forms. The advantage of this correspondence is that the composition of quadratic forms is commutative, while the multiplication in maximal orders is not. Computing this correspondence explicitly and applying these results to isogeny-based cryptography are our future work.

\section*{Acknowledgement}

The work is supported by the National Key Research and Development Program of China under Grant No. 2022YFA1004900, the National Natural Science Foundation of China under Grant No. 12201637  and No. 62202475, the Natural Science Foundation of Hunan Province of China under Grant No.2021JJ40701, and the Innovation Program for Quantum Science and Technology under Grant No. 2021ZD0302902.

\end{document}